\newtheorem{theorem}{Theorem}[section]
\newtheorem{lemma}{Lemma}[section]
\newtheorem{proposition}{Proposition}[section]
\newtheorem{remark}{Remark}[section]
\newtheorem{definition}{Definition}[section]
\newtheorem{example}{Example}[section]
\def\phi{\varphi}
\email{marek.galewski@p.lodz.pl}
\email{gmolica@unirc.it}
\thanks{{\it 2010 Mathematics Subject Classification.} 58E05, 26A33, 34A08, 34B15, 45J05.}
\keywords{Existence results, Variational Methods, Fractional problems.}
\thanks{Typeset by \LaTeX}
\begin{document}
\title[Existence results...]{Existence results for one-dimensional\\
fractional equations}
\author{Marek Galewski}
\address[M. Galewski]{Institute of Mathematics, Technical University of
Lodz, Wolczanska 215, 90-924 Lodz, Poland}
\author{Giovanni Molica Bisci}
\address[G. Molica Bisci]{Dipartimento P.A.U., Universit\`a degli Studi
``Mediterranea" di Reggio Calabria, Salita Melissari - Feo di Vito, 89100
Reggio Calabria, Italy}

\begin{abstract}
In this note a critical point result for differentiable functionals is
exploited in order to prove that a suitable class of one-dimensional
fractional problems admits at least one non-trivial solution under an
asymptotical behaviour of the nonlinear datum at zero. A concrete example of
an application is then presented.
\end{abstract}

\maketitle


\section{Introduction}

\label{sec:introduzione}

Critical point theory has been very useful in determining the existence of
solutions for integer order differential equations with some boundary
conditions; see for instance, in the vast literature on the subject, the
classical books \cite{MW, Ra, struwe, wi} and references therein. But until
now, there are few results for fractional boundary value problems (briefly
BVP) which were established exploiting this approach, since it is often very
difficult to establish a suitable space and variational functional for
fractional problems. In the literature there are some approaches connected
with investigations of fractional boundary value problems with critical
point theory methods which depend on the type of fractional derivative used.
Although fractional calculus shares some common features with classicial
differential calculus, there are some obvious differences, for example in
the context of integration by parts, see for example \cite{s1}.

In this paper, overcoming the above mentioned difficulty, a new variational
approach is provided to investigate the existence of solutions to the
following fractional BVP, namely $(F_{f})$ and given by: 
\begin{equation*}
\begin{gathered} \frac{d}{dt} \Big({}_0 D_t^{\alpha-1}({}_0^c D_t^{\alpha}
u(t)) - {}_t D_T^{\alpha-1}({}_t^c D_T^{\alpha} u(t))\Big) + f(u(t)) = 0,
\,\text{a.e. } t \in [0, T] \\ u(0) = u(T) = 0, \end{gathered}
\end{equation*}%
where $\alpha \in (1/2,1]$, ${}_{0}D_{t}^{\alpha -1}$ and $%
{}_{t}D_{T}^{\alpha -1}$ are the left and right Riemann-Liouville fractional
integrals of order $1-\alpha $ respectively, $_{0}^{c}D_{t}^{\alpha }$ and $%
_{t}^{c}D_{T}^{\alpha }$ are the left and right Caputo fractional
derivatives of order $\alpha$ respectively, and $f:\mathbb{R}\rightarrow 
\mathbb{R}$ is a continuous function.

Fractional equations appear in concrete applications in many fields such as,
among the others, optimization, finance, stratified materials, conservation
laws, ultra-relativistic limits of quantum mechanics, minimal surfaces,
materials science and water waves. This is one of the reason why, recently,
non-local fractional problems are widely studied.

\indent An interesting physical case is briefly discussed in \cite{JZ} where
the authors are interested on the existence and multiplicity of solutions
for the following problem 
\begin{equation*}
\begin{gathered} \frac{d}{dt} \Big(D_\beta(u(t))\Big) + \nabla F(t,u(t)) =
0, \,\text{a.e. } t \in [0, T] \\ u(0) = u(T) = 0, \end{gathered}
\end{equation*}
where 
\begin{equation*}
D_\beta(u(t)):= \frac{1}{2}({}_0 D_t^{-\beta}(u^{\prime }(t)) +{}_t
D_T^{-\beta}(u^{\prime }(t))), 
\end{equation*}
$\beta\in [0,1)$, and $F:[0,T]\times \mathbb{R}^{N}\rightarrow \mathbb{R}$
(with $N\geq 1$) is a suitable given function. This model describes, as
stating, that the mass flux of a particle is related to the negative
gradient via a combination of the left and right fractional integrals.

\indent Engineering applications of fractional concepts are connected with
viscoelastic models, stochastic dynamics and with recently developed
fractional-order thermoelasticity \cite{[3]}. In these elds the main use of
fractional operators has been concerned with the interpolation between the
heat flux and its time-rate of change, that is related to the well-known
second sound e ect. In other recent studies \cite{[2]} a fractional,
non-local thermoelastic model has been proposed as a particular case of the
non-local, integral, thermoelasticity introduced in \cite{[1]}. We would
like to mention also work \cite{[4]} where the authors extend the non-local
model of fractional heat conduction to the case of of a purely elastic
material accounting for the thermoelastic coupling.

On the other hand, is whether or not the existence results got in the
classical context can be extended to the non-local framework of the
fractional Laplacian type operators. In this paper, motivated by a
theoretical point of view, we are interested on the one-dimensional setting,
previously considered in several papers (see, for instance, the manuscripts 
\cite{Bo, B03} and references therein).

More concretely, in Theorem~\ref{generalkernel0f} we prove the existence of
one solution to problem $(F_f)$ requiring a simple algebraic inequality
condition namely $(S_G)$; see Remarks \ref{Y1} and \ref{Y2}. A parametric
version of this result is successively discussed in Theorem \ref{secondo} in
which, for small values of the parameter and requiring an additional
asymptotical behaviour of the potential at zero if $f(0)=0$, the existence
of one non-trivial solution is achieved; see Remark \ref{atzero}.

Moreover, we deduce the existence of solutions for small positive values of
the parameter such that the corresponding solutions have smaller and smaller
energies as the parameter goes to zero; see, for more details, Remark \ref%
{behavious}.

The proof of Theorem~\ref{generalkernel0f} (as well as of Theorem \ref%
{secondo}) is based on variational techniques. Precisely, in the sequel we
will perform the variational principle of Ricceri obtained in \cite{Ricceri}%
. Moreover, for several related topics and a careful analysis of the
abstract framework we refer to the recent monograph~\cite{kristaly}.

A special case of our results reads as follows:

\begin{theorem}
\label{ab} Let $f:\mathbb{R}\rightarrow \mathbb{R}$ be a non-negative
continuous function and $\alpha \in (1/2, 1]$. Assume that 
\begin{equation*}
\lim_{\xi\rightarrow 0^+}\frac{f(t)}{t}=+\infty.\eqno{(S_0)}
\end{equation*}
Then, for every 
\begin{equation*}
\mu\in \Lambda:=\left(0,\frac{\Gamma(\alpha)^2 |\cos(\pi \alpha)|({2\alpha-1}%
)}{T^{2\alpha}}\left(\sup_{\gamma>0}\frac{\gamma^2}{\displaystyle %
\int_0^\gamma f(s)\,ds}\right)\right),
\end{equation*}
\noindent the following parametric problem 
\begin{equation*}
\begin{gathered} \frac{d}{dt} \Big({}_0 D_t^{\alpha-1}({}_0^c D_t^{\alpha}
u(t)) - {}_t D_T^{\alpha-1}({}_t^c D_T^{\alpha} u(t))\Big) + \mu f(u(t)) =
0, \,\text{a.e. } t \in [0, T] \\ u(0) = u(T) = 0, \end{gathered}
\end{equation*}

\noindent admits at least one non-trivial solution in $E^\alpha_0$.
Moreover, one has 
\begin{equation*}
\lim_{\mu\rightarrow 0^+}\int_0^T |_0^c D_t^{\alpha} u_\mu(t)|^2 dt=0,
\end{equation*}
and the function 
\begin{equation*}
\mu\mapsto - \int_0^T {}_0^c D_t^{\alpha} u_\mu(t) \cdot {}_t^c D_T^{\alpha}
u_\mu(t) dt-\mu \int_0^T \left(\int_0^{u_\mu(t)}f(s)ds\right)dt,
\end{equation*}
is negative and strictly decreasing in $\Lambda$.
\end{theorem}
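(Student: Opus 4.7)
The plan is to derive Theorem~\ref{ab} as a direct specialization of Theorem~\ref{secondo}, which already encapsulates the variational machinery (based on Ricceri's principle \cite{Ricceri}) for parametric fractional problems under an asymptotic assumption on the potential at zero. Thus the task is essentially reduced to checking two things: (i) the range $\Lambda$ stated here is contained in the admissible parameter window prescribed by Theorem~\ref{secondo}, and (ii) the pointwise hypothesis $(S_0)$ on $f$ implies the corresponding asymptotic condition on the primitive $F(\xi):=\int_0^\xi f(s)\,ds$ required to guarantee non-triviality.

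For (i), I would recall that the general algebraic condition $(S_G)$ takes the form
\[
\mu \int_0^\gamma f(s)\,ds < \frac{\Gamma(\alpha)^2\,|\cos(\pi\alpha)|\,(2\alpha-1)}{T^{2\alpha}}\,\gamma^2
\]
for some $\gamma > 0$, where the constant on the right is essentially the inverse of the best Sobolev-type embedding constant for $E_0^\alpha \hookrightarrow L^\infty([0,T])$. The definition of the interval $\Lambda$ in the statement amounts precisely to requiring that $(S_G)$ hold for \emph{some} admissible $\gamma > 0$; consequently, Theorem~\ref{secondo} applies and yields a solution $u_\mu \in E_0^\alpha$ for every $\mu \in \Lambda$.

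For (ii), the hypothesis $(S_0)$ gives $f(t)/t \to +\infty$ as $t \to 0^+$; an elementary argument (comparison, or L'H\^opital) promotes this to $F(\xi)/\xi^2 \to +\infty$ as $\xi \to 0^+$. This is exactly the behaviour at zero preventing the variational argument from returning the trivial solution: a test element of $E_0^\alpha$ concentrated near the origin produces negative energy for $J_\mu$, forcing $u_\mu \not\equiv 0$. The two remaining assertions, namely $\int_0^T |{}_0^c D_t^\alpha u_\mu(t)|^2\,dt \to 0$ as $\mu \to 0^+$ and the strict monotonicity of the associated energy map on $\Lambda$, are direct consequences of Ricceri's principle as packaged into Theorem~\ref{secondo}, so no additional computation is needed.

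The main potential obstacle lies not in this reduction itself but upstream, in the sign and coercivity structure of the bilinear form $-\int_0^T {}_0^c D_t^\alpha u(t) \cdot {}_t^c D_T^\alpha u(t)\,dt$ on $E_0^\alpha$: its positivity depends crucially on $\alpha \in (1/2,1]$ and on the factor $|\cos(\pi\alpha)|$, which underlies the precise constant appearing in $\Lambda$. Assuming this has already been established in the derivation of Theorem~\ref{generalkernel0f}, the proof of Theorem~\ref{ab} reduces to the verification outlined above.
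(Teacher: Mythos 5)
Your route is exactly the paper's: Theorem~\ref{ab} is not proved independently there but is obtained as the special case of Theorem~\ref{secondo} for non-negative $f$ (so that $\max_{|\xi|\leq\gamma}F(\xi)=\int_0^\gamma f(s)\,ds$, as in Remark~\ref{Y1}), with non-triviality supplied by $(S_0)\Rightarrow F(\xi)/\xi^2\to+\infty$ as in Remark~\ref{atzero}. Your reductions (i) and (ii) are correct and match the paper.

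One caveat: your claim that the assertions $\|u_\mu\|_\alpha\to 0$ and the strict monotonicity of $\mu\mapsto J_\mu(u_\mu)$ are ``direct consequences of Ricceri's principle as packaged into Theorem~\ref{secondo}, so no additional computation is needed'' overstates the case. Theorem~\ref{secondo} as stated only gives existence; the paper proves these two properties separately in Remark~\ref{behavious}, using (a) the uniform localization $u_\mu\in\Phi^{-1}\bigl((-\infty,r_{\bar\mu})\bigr)$ together with coercivity of $\Phi$ to get a bound on $\|u_\mu\|_\alpha$ independent of $\mu$, (b) the identity $\Phi'(u_\mu)(u_\mu)=\mu\int_0^T f(u_\mu(t))u_\mu(t)\,dt$ obtained by testing the Euler equation with $u_\mu$ itself, which combined with $2|\cos(\pi\alpha)|\,\|u_\mu\|_\alpha^2\leq\Phi'(u_\mu)(u_\mu)$ yields the decay, and (c) the comparison of the infima $m_{\mu_i}=\inf\bigl(\Phi/\mu_i-\Psi\bigr)$ together with their negativity (which itself rests on the non-triviality argument, i.e. $J_\mu(u_\mu)<0$) to get strict monotonicity. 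These are short but genuinely additional arguments; a complete proof of Theorem~\ref{ab} must include them rather than attribute them to the abstract critical point theorem.
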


We would like to emphasize that, as observed in Remark \ref{unb}, the energy
functional $J_{\mu }$ associated to the above parametric problem can be
unbounded from below in the ambient space $E_{0}^{\alpha }$. Hence, in order
to find critical points of $J_{\mu }$ we can not argue, in general, by
direct minimization techniques; see Example \ref{example} and Remark \ref%
{conclusion}.

Our assumptions also do not allow to use classical minimization topics and
related arguments.

Following \cite{JZ}, we point out that there are few results on the
solutions to fractional BVP which were established by the critical point
theory, since it is often very difficult to establish a suitable space and
variational functional for fractional differential equations with some
boundary conditions. These difficulties are mainly caused by the following
significative facts:

\begin{itemize}
\item[(i)] the composition rule in general fails to be satisfied by
fractional integral and fractional derivative operators;

\item[(ii)] the fractional integral is a singular integral operator and
fractional derivative operator is non-local;

\item[(iii)] the adjoint of a fractional differential operator is not the
negative of itself.
\end{itemize}

\indent It should be mentioned here that the fractional variational
principles were started to be investigated deeply. The fractional calculus
of variations was introduced by Riewe in \cite{Ri} where he presented a new
approach to mechanics that allows one to obtain the equations for a
nonconservative system using certain functionals.

For completeness, we recall that a careful and interesting analysis of the
elliptic fractional case was developed in the recent and nice works \cite%
{sY,sBNRES, servadeiKavian, servadeivaldinociREGO} and references therein.

The paper is organized as follows. In Section~\ref{sec:preliminary} we give
the principal definitions related to our abstract functional framework. In
Section~\ref{sec:main} we prove Theorems~\ref{generalkernel0f} and~\ref%
{secondo}, while Section~\ref{sec:comm} is devoted to some comments on the
results of the paper. Finally, an application of Theorem~\ref{secondo}, is
presented in Example \ref{Y2} studying a one-dimensional fractional equation
involving a suitable non-linearity.

\section{Some preliminaries}

\label{sec:preliminary} This section is devoted to the notations used along
the paper. We also give some preliminary results which will be useful in the
sequel.

\subsection{The functional setting}

\label{subsec:X0}

\begin{definition}
\label{def2.1} \textrm{Let $u$ be a function defined on $[a , b]$. The left
and right Riemann-Liouville fractional integrals of order $\alpha>0$ for a
function $u$ are defined by 
\begin{equation*}
{}_a D_t^{-\alpha}u(t) := \frac{1}{\Gamma(\alpha)} \int_a^t (t-s)^{\alpha-1}
u(s) ds,\newline
\end{equation*}
and 
\begin{equation*}
{}_t D_b^{-\alpha}u(t) := \frac{1}{\Gamma(\alpha)} \int_t^b (s-t)^{\alpha-1}
u(s) ds,
\end{equation*}
for every $t \in [a, b]$, provided the right-hand sides are pointwise
defined on $[a, b]$, where $\Gamma(\alpha)$ is the standard gamma function
given by 
\begin{equation*}
\Gamma(\alpha):=\int_0^{+\infty}z^{\alpha-1}e^{-z}dz.
\end{equation*}
}
\end{definition}

Set $AC^n([a, b], {\mathbb{R}})$ the space of functions $u:[a,b]\rightarrow {%
\mathbb{R}}$ such that $u\in C^{n-1}([a, b], {\mathbb{R}})$ and $%
u^{(n-1)}\in AC([a, b], {\mathbb{R}})$. Here, as usual, $C^{n-1}([a, b], {%
\mathbb{R}})$ denotes the set of mappings having $(n-1)$ times continuously
differentiable on $[a, b]$. In particular we denote $AC([a, b], {\mathbb{R}}%
):=AC^1([a, b], {\mathbb{R}})$.

\begin{definition}
\label{def2.2} \textrm{\ Let $\gamma \ge 0$ and $n \in \mathbb{N}$. }

\textrm{$(i)$ If $\gamma \in (n-1, n)$ and $u\in AC^n([a, b], {\mathbb{R}})$%
, then the left and right Caputo fractional derivatives of order $\gamma$
for function $u$ denoted by $_a^c D_t^{\gamma}u(t)$ and $_t^c
D_b^{\gamma}u(t)$, respectively, exist almost everywhere on $[a, b]$, $_a^c
D_t^{\gamma}u(t)$ and $_t^c D_b^{\gamma}u(t)$ are represented by 
\begin{equation*}
{}_a^c D_t^{\gamma}u(t) = \frac{1}{\Gamma(n-\gamma)} \int_a^t
(t-s)^{n-\gamma-1} u^{(n)}(s) ds,
\end{equation*}
and 
\begin{equation*}
{}_t^c D_b^{\gamma}u(t) = \frac{(-1)^n}{\Gamma(n-\gamma)} \int_t^b
(s-t)^{n-\gamma-1} u^{(n)}(s) ds,
\end{equation*}
for every $t \in [a, b]$, respectively. }

\textrm{$(ii)$ If $\gamma = n - 1$ and $u \in AC^{n-1}([a, b], {\mathbb{R}})$%
, then $_a^c D_t^{n-1}u(t)$ and $_t^c D_b^{n-1}u(t)$ are represented by 
\begin{equation*}
{}_a^c D_t^{n-1}u(t) = u^{(n-1)}(t), \quad\text{and}\quad _t^c D_b^{n-1}u(t)
= (-1)^{(n-1)} u^{(n-1)}(t),
\end{equation*}
}
\end{definition}

\noindent for every $t \in [a, b]$.

With these definitions, we have the rule for fractional integration by
parts, and the composition of the Riemann-Liouville fractional integration
operator with the Caputo fractional differentiation operator, which were
proved in \cite{k1} and \cite{s1}.

\begin{proposition}
\label{prop2.1} We have the following property of fractional integration 
\begin{equation}
\int_a^b [_a D_t^{-\gamma}u(t)] v(t) dt = \int_a^b [_t D_b^{-\gamma}v(t)]
u(t) dt, \quad \gamma > 0,  \label{e2.1}
\end{equation}
provided that $u \in L^p([a, b], {\mathbb{R}})$, $v \in L^q([a, b], {\mathbb{%
R}})$ and $p \ge 1$, $q \ge 1$, $1/p + 1/q \le 1 + \gamma$ or $p \neq 1$, $q
\neq 1$, $1/p + 1/q = 1 + \gamma$.
\end{proposition}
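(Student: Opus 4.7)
The identity \eqref{e2.1} is formally a Fubini-type statement. The plan is to unfold both sides into a common double integral, to justify the interchange of the order of integration using the $L^p$--$L^q$ hypotheses, and then to conclude by relabeling the dummy variables.

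First I would substitute Definition \ref{def2.1} into both sides of \eqref{e2.1}. The left-hand side becomes
$$\frac{1}{\Gamma(\gamma)}\int_a^b \left(\int_a^t (t-s)^{\gamma-1} u(s)\, ds\right) v(t)\, dt,$$
which is the integral of the function $(s,t)\mapsto (t-s)^{\gamma-1}u(s)v(t)$ over the triangle $T:=\{(s,t):a\le s\le t\le b\}$. Exchanging the order of integration over $T$ and then renaming the dummy variables $s\leftrightarrow t$ formally yields
$$\frac{1}{\Gamma(\gamma)}\int_a^b u(t)\left(\int_t^b (s-t)^{\gamma-1}v(s)\, ds\right)dt,$$
which is precisely the right-hand side of \eqref{e2.1}. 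Hence the only genuinely non-trivial step is to legitimise the swap of integrals.

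The central estimate to be established is therefore
$$I:=\int_a^b\int_a^t (t-s)^{\gamma-1}|u(s)||v(t)|\, ds\, dt <\infty$$
under the stated constraints on $p,q,\gamma$. I would regard the inner integral as a weakly singular Riesz-type potential acting on $|u|$ and invoke the one-dimensional Hardy--Littlewood--Sobolev inequality on the bounded interval $[a,b]$: for $1<p<1/\gamma$ the operator $u\mapsto {}_aD_t^{-\gamma}u$ maps $L^p([a,b])$ continuously into $L^r([a,b])$ with $1/r=1/p-\gamma$. A single application of H\"older's inequality then gives
$$I\le \Gamma(\gamma)\bigl\|{}_aD_t^{-\gamma}|u|\bigr\|_{L^{r}([a,b])}\|v\|_{L^{r'}([a,b])}\le C\|u\|_{L^p([a,b])}\|v\|_{L^q([a,b])},$$
whenever $1/q\le 1/r'=1-1/p+\gamma$, that is, exactly when $1/p+1/q\le 1+\gamma$. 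The complementary range $1/p\le \gamma$ is easier: then $(t-s)^{\gamma-1}$ is locally integrable in $s$ and a direct H\"older bound against $|u|\in L^p$ places ${}_aD_t^{-\gamma}|u|$ in $L^\infty([a,b])$, after which the embedding $L^q([a,b])\hookrightarrow L^1([a,b])$ on the bounded interval closes the estimate.

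The main obstacle is precisely the sharp matching of exponents forced by the singularity of the Abel-type kernel $(t-s)^{\gamma-1}$ at the diagonal $s=t$: the endpoint case $1/p+1/q=1+\gamma$ degenerates whenever $p=1$ or $q=1$, which is exactly the reason for the parenthetical exclusion in the statement. Once $I<\infty$ is in hand, Fubini--Tonelli applies without restriction, the two iterated integrals agree, and the symmetric form of the double integral over $T$ delivers \eqref{e2.1}.
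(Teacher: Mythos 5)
You should first know that the paper contains no proof of Proposition~\ref{prop2.1} at all: it is quoted verbatim from the literature (the paper points to \cite{k1} and \cite{s1}), where it is proved essentially by the strategy you propose, namely writing both sides as a double integral over the triangle $\{a\le s\le t\le b\}$ and justifying Tonelli--Fubini via mapping properties of the Riemann--Liouville integral. Your reduction to the finiteness of $I$ is correct, and your Case~1 ($1<p<1/\gamma$, Hardy--Littlewood--Sobolev into $L^r$ with $1/r=1/p-\gamma$, then H\"older and the embedding $L^q\hookrightarrow L^{r'}$ on the bounded interval) is sound, including at the endpoint $1/p+1/q=1+\gamma$ with $p,q\neq 1$.

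There are, however, two genuine gaps in your case analysis. First, your Case~2 is false as stated at the boundary exponent: the direct H\"older bound requires $\int_a^t(t-s)^{(\gamma-1)p'}\,ds<\infty$, i.e. $(1-\gamma)p'<1$, which for $\gamma<1$ is equivalent to the \emph{strict} inequality $1/p<\gamma$; at $1/p=\gamma$ the integrand is exactly $(t-s)^{-1}$ and the bound fails (this is the endpoint failure of HLS: ${}_aD_t^{-\gamma}$ does not map $L^{1/\gamma}$ into $L^\infty$). Since $p=1/\gamma$ with $q>1$ is admissible under the hypotheses (equality $1/p+1/q=1+\gamma$ would force $q=1$, which is excluded, so one is in the strict-inequality regime), this case must be covered; it can be repaired by replacing $p$ with a slightly smaller $p_1>1$, using $L^p([a,b])\subset L^{p_1}([a,b])$ and the slack in the strict inequality, and then invoking your Case~1. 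Second, for $\gamma<1$ your two cases together only cover $p>1$: the ranges $\gamma<1/p<1$ and $1/p<\gamma$ omit $p=1$ entirely, yet the proposition allows $p=1$ whenever $1/q<\gamma$ (the strict-inequality regime). This gap is closed by the symmetry of the double integral $I$ in the roles of $(u,p)$ and $(v,q)$: when $p=1$ one has $1/q<\gamma$ strictly, so your (corrected) Case~2 applied to $v$ gives ${}_tD_b^{-\gamma}|v|\in L^\infty([a,b])$, which pairs against $u\in L^1$. With these two repairs --- neither of which is merely cosmetic, since each concerns exponents explicitly permitted by the statement --- your argument becomes complete.
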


\begin{proposition}
\label{prop2.2 } Let $n \in \mathbb{N}$ and $n-1 < \gamma \le n$. If $u \in
AC^n([a, b], {\mathbb{R}})$ or $u \in C^n([a, b], {\mathbb{R}})$, then 
\begin{equation*}
{}_a D_t^{-\gamma}({}_a^c D_t^{\gamma} u(t)) = u(t) - \sum _{j=0}^{n-1} 
\frac{u^{(j)}(a)}{j!}(t-a)^j,
\end{equation*}
and 
\begin{equation*}
{}_t D_b^{-\gamma}({}_t^c D_b^{\gamma} u(t)) = u(t) - \sum _{j=0}^{n-1} 
\frac{(-1)^ju^{(j)}(b)}{j!}(b-t)^j,
\end{equation*}
for every $t \in [a, b]$. In particular, if $0 < \gamma \le 1$ and $u \in
AC([a, b], {\mathbb{R}})$ or $u \in C^1([a, b], {\mathbb{R}})$, then 
\begin{equation*}
{}_a D_t^{-\gamma}({}_a^c D_t^{\gamma} u(t)) = u(t) - u(a),
\end{equation*}
{and} 
\begin{equation*}
_t D_b^{-\gamma}({}_t^c D_b^{\gamma} u(t)) = u(t) - u(b).  \label{e2.2}
\end{equation*}
\end{proposition}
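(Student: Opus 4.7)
The plan is to reduce the identity to the classical Taylor formula with integral remainder by recognizing the Caputo derivative as a Riemann–Liouville integral of the $n$-th ordinary derivative, and then invoking the semigroup property of the Riemann–Liouville integrals. For the left-sided case, I first rewrite
\[
{}_a^c D_t^{\gamma}u(t)=\frac{1}{\Gamma(n-\gamma)}\int_a^t (t-s)^{n-\gamma-1}u^{(n)}(s)\,ds={}_aD_t^{-(n-\gamma)}u^{(n)}(t),
\]
which uses only Definitions~\ref{def2.1} and \ref{def2.2}. The regularity hypothesis $u\in AC^n([a,b],\mathbb{R})$ (or $C^n$) guarantees $u^{(n)}\in L^1([a,b],\mathbb{R})$, so both sides are well defined.

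The next step is to apply the Riemann–Liouville integral ${}_aD_t^{-\gamma}$ to the above and use the semigroup identity
\[
{}_aD_t^{-\gamma}\bigl({}_aD_t^{-(n-\gamma)}u^{(n)}(t)\bigr)={}_aD_t^{-n}u^{(n)}(t),
\]
valid for $L^1$ functions; this follows via Fubini's theorem from the classical Euler Beta-function identity $\int_s^t (t-\tau)^{\gamma-1}(\tau-s)^{n-\gamma-1}\,d\tau=\Gamma(\gamma)\Gamma(n-\gamma)(t-s)^{n-1}/\Gamma(n)$. The resulting right-hand side is nothing but the $n$-fold iterated integral $\frac{1}{(n-1)!}\int_a^t (t-s)^{n-1}u^{(n)}(s)\,ds$.

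At this stage the conclusion is obtained from Taylor's formula with integral remainder applied to $u$ at the point $a$: integrating $u^{(n)}$ by parts $n$ times (using $u\in AC^n$) gives exactly
\[
\frac{1}{(n-1)!}\int_a^t (t-s)^{n-1}u^{(n)}(s)\,ds=u(t)-\sum_{j=0}^{n-1}\frac{u^{(j)}(a)}{j!}(t-a)^j,
\]
which is the claimed identity. Specializing to $0<\gamma\le1$ and $n=1$ collapses the sum to $u(a)$, yielding the stated particular case.

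The right-sided statement is handled by the analogous argument with the substitution $\tau\mapsto a+b-\tau$: one writes ${}_t^cD_b^{\gamma}u(t)=(-1)^n\,{}_tD_b^{-(n-\gamma)}u^{(n)}(t)$, applies the mirror semigroup property ${}_tD_b^{-\gamma}\circ {}_tD_b^{-(n-\gamma)}={}_tD_b^{-n}$, and then performs Taylor expansion at $b$, where each successive integration by parts generates a factor of $-1$ that exactly cancels against $(-1)^n$ and produces the signs $(-1)^j$ in the Taylor polynomial at $b$. The main technical obstacle is simply a careful bookkeeping of signs and boundary terms across these $n$ integrations by parts; once the Beta-function computation underlying the semigroup law is in place, nothing else is substantive.
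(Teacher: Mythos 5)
Your argument is correct, but there is nothing in the paper to compare it against: the paper states Proposition~\ref{prop2.2 } without proof, quoting it as a known result from the references \cite{k1} and \cite{s1}. What you have written is essentially the standard textbook derivation that those references contain: identify ${}_a^c D_t^{\gamma}u={}_aD_t^{-(n-\gamma)}u^{(n)}$ directly from Definitions~\ref{def2.1} and~\ref{def2.2}, compose with ${}_aD_t^{-\gamma}$ and collapse the two kernels into the $n$-fold integral via Fubini and the Beta-function identity, then recognize Taylor's formula with integral remainder (valid for $AC^n$ functions, since the fundamental theorem of calculus and integration by parts hold in that class); the right-sided case follows by the mirrored computation with the sign bookkeeping you describe, which does check out: $(t-b)^j=(-1)^j(b-t)^j$ and the remainder picks up exactly the factor $(-1)^n$ that cancels the one in the definition of ${}_t^cD_b^{\gamma}$. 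The only point you should patch is the endpoint $\gamma=n$, which the proposition includes ($n-1<\gamma\le n$) but your reduction does not cover: the integral representation of the Caputo derivative in Definition~\ref{def2.2}$(i)$ holds only for $\gamma\in(n-1,n)$, and at $\gamma=n$ the order $n-\gamma$ of the auxiliary Riemann--Liouville integral degenerates to $0$, so the semigroup step is vacuous. That case is trivial, though: there ${}_a^cD_t^{n}u=u^{(n)}$ and ${}_t^cD_b^{n}u=(-1)^nu^{(n)}$, so the claim is literally Taylor's formula with integral remainder at $a$ (respectively at $b$), with no composition argument needed. With that one-line addendum your proof is complete and self-contained, which is more than the paper itself provides.
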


\begin{remark}
\label{rmk2.1} \textrm{We recall that a function $u\in AC([0,T],\mathbb{R}) $
is said to be a solution of $(F_{f})$ if the map 
\begin{equation*}
t\mapsto {}_0 D_t^{\alpha-1}({}_0^c D_t^{\alpha} u(t)) - {}_t
D_T^{\alpha-1}({}_t^c D_T^{\alpha} u(t)),
\end{equation*}
is derivable $($in the classical sense$)$ for almost every $t\in [0,T]$, and 
\begin{equation*}
\begin{gathered} \frac{d}{dt} \Big({}_0 D_t^{\alpha-1}({}_0^c D_t^{\alpha}
u(t)) - {}_t D_T^{\alpha-1}({}_t^c D_T^{\alpha} u(t))\Big) + f(u(t)) = 0,
\,\text{a.e. } t \in [0, T] \\ u(0) = u(T) = 0. \end{gathered}
\end{equation*}%
}
\end{remark}

To establish a variational structure for $(F_f)$, it is necessary to
construct appropriate function spaces.

Following \cite{JZ}, denote by $C_0^{\infty}([0, T],\mathbb{R})$ the set of
all functions $g \in C^{\infty}([0, T],\mathbb{R})$ with $g(0) = g(T) = 0$.

\begin{definition}
\label{def2.3} \textrm{\ Let $0 < \alpha \le 1$. The fractional derivative
space $E_0^{\alpha}$ is defined by the closure of $C_0^{\infty}([0, T],%
\mathbb{R})$ with respect to the norm 
\begin{equation*}
\|u\| := \Big(\int_0^T |_0^c D_t^{\alpha} u(t)|^2 dt + \int_0^T |u(t)|^2 dt %
\Big)^{1/2}.
\end{equation*}
}
\end{definition}

As observed in \cite{JZ}, the space $E_0^{\alpha}$ is a Hilbert space with
norm 
\begin{equation*}
\|u\|_{\alpha} := \Big(\int_0^T |_0^c D_t^{\alpha} u(t)|^2 dt\Big)^{1/2}.
\end{equation*}
\noindent For every $u\in E_{0}^{\alpha }$, set 
\begin{equation*}
\Vert u\Vert _{L^{s}}:=\Big(\int_{0}^{T}|u(t)|^{s}dt\Big)^{1/s},\,\,\,\,\,%
\,(s\geq 1)
\end{equation*}%
and 
\begin{equation*}
\Vert u\Vert _{\infty }:=\max_{t\in \lbrack 0,T]}|u(t)|.
\label{clas-embending}
\end{equation*}

\indent The next result will be crucial in the sequel.

\begin{lemma}
\label{EM} Assume $\alpha \in (1/2, 1]$ and let $\{u_j\}\subset
E_{0}^{\alpha}$ be a sequence weakly convergent to $u\in E_{0}^{\alpha}$.
Then, $u_j\rightarrow u$ in $C^0([0,T],\mathbb{R})$, i.e. $%
\|u_j-u\|_\infty\rightarrow 0,$ as $j\rightarrow \infty$.
\end{lemma}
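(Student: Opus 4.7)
The plan is to combine two standard facts: weak convergence in a Hilbert space implies norm-boundedness (Banach–Steinhaus), and $E_0^\alpha$ embeds compactly into $C^0([0,T],\RR)$ when $\alpha>1/2$. So the first move is to note that $\{u_j\}$ is bounded in $E_0^\alpha$, and then to establish the compact embedding into $C^0([0,T],\RR)$ by verifying the hypotheses of Arzelà–Ascoli for any $\|\cdot\|_\alpha$-bounded set.

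The key analytic input is the integral representation coming from Proposition~\ref{prop2.2}: since $E_0^\alpha$ is obtained by closing $C_0^\infty([0,T],\RR)$, elements $u\in E_0^\alpha$ satisfy $u(0)=0$ (as a trace/limit) and
\[
u(t)=\frac{1}{\Gamma(\alpha)}\int_0^t (t-s)^{\alpha-1}\, {}_0^c D_s^{\alpha}u(s)\,ds,\qquad t\in[0,T].
\]
From this, a direct application of the Cauchy–Schwarz inequality (using $\alpha>1/2$, so $(t-s)^{2\alpha-2}$ is integrable) yields the pointwise bound
\[
|u(t)|\le \frac{t^{\alpha-1/2}}{\Gamma(\alpha)\sqrt{2\alpha-1}}\,\|u\|_\alpha,
\]
so in particular $\|u\|_\infty\le C_{\alpha,T}\|u\|_\alpha$. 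Splitting the difference $u(t_2)-u(t_1)$ (for $0\le t_1<t_2\le T$) into the integral over $[0,t_1]$ of $(t_2-s)^{\alpha-1}-(t_1-s)^{\alpha-1}$ plus the integral over $[t_1,t_2]$ of $(t_2-s)^{\alpha-1}$, and applying Cauchy–Schwarz to each piece, produces a Hölder estimate of the form
\[
|u(t_2)-u(t_1)|\le C'_{\alpha}\,\|u\|_\alpha\,|t_2-t_1|^{\alpha-1/2}.
\]
The main technical point is the first piece, where one needs the elementary inequality $\big((t_2-s)^{\alpha-1}-(t_1-s)^{\alpha-1}\big)^2\le (t_1-s)^{2\alpha-2}-(t_2-s)^{2\alpha-2}$ (valid since $\alpha-1\le 0$ and $x\mapsto x^{2\alpha-2}$ is convex decreasing), which after integration gives an $O(|t_2-t_1|^{2\alpha-1})$ contribution; this is the only nontrivial calculation and is the main obstacle.

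These two estimates say that any $\|\cdot\|_\alpha$-bounded family is uniformly bounded and equicontinuous in $C^0([0,T],\RR)$, so by Arzelà–Ascoli the embedding $E_0^\alpha\hookrightarrow C^0([0,T],\RR)$ is compact. Applied to $\{u_j\}$, every subsequence admits a further subsequence converging uniformly to some $v\in C^0([0,T],\RR)$. Because the embedding is in particular continuous, point evaluations are continuous linear functionals on $E_0^\alpha$; weak convergence $u_j\rightharpoonup u$ then forces $u_j(t)\to u(t)$ pointwise, identifying $v=u$. Since every subsequence has a further subsequence with the same limit $u$, the whole sequence satisfies $\|u_j-u\|_\infty\to 0$, which is the desired conclusion.
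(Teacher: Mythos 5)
Your argument is correct, but note that the paper does not actually prove this lemma: it simply cites \cite[Proposition 3.3]{JZ}, so there is nothing internal to compare against. What you have written is essentially the standard proof of that cited result: boundedness of $\{u_j\}$ from Banach--Steinhaus, the pointwise and H\"older estimates obtained from the representation $u={}_0D_t^{-\alpha}({}_0^cD_t^{\alpha}u)$ via Cauchy--Schwarz (your first estimate is exactly the second inequality of Lemma \ref{lem2.1}), compactness of the embedding into $C^0([0,T],\mathbb{R})$ by Arzel\`a--Ascoli, and identification of the uniform limit with the weak limit through the continuity of point evaluations. The elementary inequality $(a-b)^2\le a^2-b^2$ for $0\le b\le a$ that you invoke for the first piece of the increment is correct (your stated justification via convexity is a slight detour; monotonicity of $x\mapsto x^{\alpha-1}$ already gives $0\le (t_2-s)^{\alpha-1}\le(t_1-s)^{\alpha-1}$, and then the inequality is immediate), and the resulting $O(|t_2-t_1|^{2\alpha-1})$ bound is right since $t_1^{2\alpha-1}-t_2^{2\alpha-1}\le 0$. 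The only step you gloss over is the validity of the representation formula for a general $u\in E_0^{\alpha}$ rather than for $u\in C_0^{\infty}([0,T],\mathbb{R})$: Proposition \ref{prop2.2 } is stated for $AC$ or $C^1$ functions, so one must pass to the limit along a sequence in $C_0^{\infty}$ converging in $\|\cdot\|$, using that ${}_0D_t^{-\alpha}$ is bounded from $L^2$ into $C^0$ (which is exactly your first Cauchy--Schwarz estimate). This is routine but worth a sentence if the proof is to be self-contained.
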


See \cite[Proposition 3.3]{JZ}.

\smallskip \noindent Finally, one has the following two Lemmas.

\begin{lemma}
\label{lem2.1} Let $\alpha \in (1/2, 1]$. For every $u\in E_0^{\alpha}$, we
have 
\begin{gather*}
\|u\|_{L^2} \le \frac{T^{\alpha}}{\Gamma(\alpha+1)} \|\empty_0^c
D_t^{\alpha}u\|_{L^2},  \label{e2.4} \\
\|u\|_{\infty} \le \frac{T^{\alpha - \frac{1}{2}}}{\Gamma(\alpha)\sqrt{%
2\alpha-1}} \|_0^c D_t^{\alpha} u\|_{L^2}.  \label{e2.5}
\end{gather*}
\end{lemma}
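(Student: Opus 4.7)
The plan is to derive both estimates from a single representation formula and then extend by density. Applying Proposition~\ref{prop2.2} with $n=1$ and $a=0$ to any $u \in C_0^\infty([0,T],\RR)$, and using $u(0)=0$, I would obtain the pointwise identity
$$u(t) \;=\; {}_0 D_t^{-\alpha}\bigl({}_0^c D_t^\alpha u(t)\bigr) \;=\; \frac{1}{\Gamma(\alpha)}\int_0^t (t-s)^{\alpha-1}\, {}_0^c D_s^\alpha u(s)\, ds.$$
This reduces both inequalities to controlling a Riemann--Liouville integral acting on ${}_0^c D_t^\alpha u \in L^2([0,T],\RR)$.

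For the $L^\infty$ bound, I would apply the Cauchy--Schwarz inequality to the above representation. Since $\alpha > 1/2$, the exponent $2\alpha - 2$ is strictly greater than $-1$, so
$$\int_0^t (t-s)^{2\alpha-2}\, ds \;=\; \frac{t^{2\alpha-1}}{2\alpha-1},$$
giving $|u(t)| \le \frac{t^{\alpha - 1/2}}{\Gamma(\alpha)\sqrt{2\alpha-1}} \, \|{}_0^c D_t^\alpha u\|_{L^2}$. Taking the supremum over $t \in [0,T]$ yields the claimed $L^\infty$ estimate.

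For the $L^2$ bound, I would recognize the representation as a convolution on $[0,T]$ with kernel $k(t) := \chi_{[0,T]}(t)\, t^{\alpha-1}/\Gamma(\alpha)$, writing $u = k \ast f$ where $f := \chi_{[0,T]}\, {}_0^c D_t^\alpha u$. A direct computation shows
$$\|k\|_{L^1(\RR)} \;=\; \frac{1}{\Gamma(\alpha)}\int_0^T t^{\alpha-1}\, dt \;=\; \frac{T^\alpha}{\Gamma(\alpha+1)},$$
and then Young's convolution inequality $\|k \ast f\|_{L^2} \le \|k\|_{L^1}\|f\|_{L^2}$ delivers the first estimate.

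The main (and essentially the only) subtlety is the density step, since Proposition~\ref{prop2.2} is stated for $AC$ functions while elements of $E_0^\alpha$ are merely $\|\cdot\|_\alpha$-limits of $C_0^\infty$. Given $u \in E_0^\alpha$, choose $u_n \in C_0^\infty([0,T],\RR)$ with $\|u_n - u\|_\alpha \to 0$. For the $L^2$ inequality, equivalence of the defining norm and $\|\cdot\|_\alpha$ forces $u_n \to u$ in $L^2$, so both sides converge. For the $L^\infty$ inequality, applying the already-established bound to the differences $u_n - u_m$ shows that $\{u_n\}$ is Cauchy in $C^0([0,T],\RR)$, hence converges uniformly to a continuous representative of $u$, and the estimate passes to the limit. (Alternatively, one could invoke Lemma~\ref{EM} to identify the uniform limit with $u$.)
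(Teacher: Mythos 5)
Your proof is correct; note that the paper itself gives no argument for this lemma, simply deferring to \cite{JZ}, and your derivation (the representation $u={}_0D_t^{-\alpha}({}_0^cD_t^{\alpha}u)$ from Proposition~\ref{prop2.2} with $u(0)=0$, Cauchy--Schwarz for the sup bound, Young's convolution inequality for the $L^2$ bound, then density) is precisely the standard proof found in that reference. The constants check out, and your handling of the density step is adequate.
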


\begin{lemma}
\label{lem2.2} Let $\alpha \in (1/2, 1]$, then for every $u \in E_0^{\alpha} 
$, we have 
\begin{equation*}
|\cos (\pi \alpha)| \|u\|_{\alpha}^2 \le - \int_0^T {}_0^c D_t^{\alpha} u(t)
\cdot {}_t^c D_T^{\alpha} u(t) dt \le \frac{1}{|\cos (\pi \alpha)|}
\|u\|_{\alpha}^2.  \label{e2.7}
\end{equation*}
\end{lemma}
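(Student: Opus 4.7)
\medskip
\noindent\textbf{Proof plan for Lemma \ref{lem2.2}.} The natural strategy is to transport the problem from $[0,T]$ to the whole real line, where the left and right Riemann--Liouville operators diagonalize under the Fourier transform.

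First, by density of $C_0^{\infty}([0,T],\mathbb{R})$ in $E_0^{\alpha}$ (Definition \ref{def2.3}) and continuity of both sides of the claimed double inequality with respect to $\|\cdot\|_{\alpha}$, it is enough to establish the estimates for $u\in C_0^{\infty}([0,T],\mathbb{R})$. Fix such a $u$ and let $\widetilde u$ denote its extension by zero to $\mathbb{R}$. Because $u(0)=u(T)=0$, Proposition \ref{prop2.2 } identifies the Caputo derivatives of $u$ with the corresponding Riemann--Liouville ones, and a direct check shows
\[
U(t):={}_{-\infty}D_t^{\alpha}\widetilde u(t)={}_0^c D_t^{\alpha}u(t),\qquad V(t):={}_tD_{\infty}^{\alpha}\widetilde u(t)={}_t^c D_T^{\alpha}u(t),\qquad t\in[0,T],
\]
while $U\equiv 0$ on $(-\infty,0)$ and $V\equiv 0$ on $(T,+\infty)$. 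Hence $U(t)V(t)=0$ outside $[0,T]$ and
\[
\int_0^T {}_0^c D_t^{\alpha}u(t)\cdot {}_t^c D_T^{\alpha}u(t)\,dt=\int_{\mathbb{R}}U(t)V(t)\,dt.
\]

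Next, using the Fourier multiplier identities $\widehat{U}(\xi)=(i\xi)^{\alpha}\widehat{\widetilde u}(\xi)$ and $\widehat{V}(\xi)=(-i\xi)^{\alpha}\widehat{\widetilde u}(\xi)$ together with Plancherel's theorem, one computes
\[
\int_{\mathbb{R}}U(t)V(t)\,dt=\mathrm{Re}\!\int_{\mathbb{R}}(i\xi)^{\alpha}\overline{(-i\xi)^{\alpha}}|\widehat{\widetilde u}(\xi)|^2\,d\xi=\cos(\pi\alpha)\int_{\mathbb{R}}|\xi|^{2\alpha}|\widehat{\widetilde u}(\xi)|^2\,d\xi,
\]
where the real part appears because $|\widehat{\widetilde u}|^{2}$ is even and the left-hand side is real. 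Analogously $\int_{\mathbb{R}}U^{2}\,dt=\int_{\mathbb{R}}V^{2}\,dt=\int_{\mathbb{R}}|\xi|^{2\alpha}|\widehat{\widetilde u}|^2\,d\xi$. Since $\cos(\pi\alpha)<0$ for $\alpha\in(1/2,1]$, this yields the master equality
\[
-\int_0^T {}_0^c D_t^{\alpha}u\cdot {}_t^c D_T^{\alpha}u\,dt=|\cos(\pi\alpha)|\int_{\mathbb{R}}U^{2}\,dt.
\]

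The lower bound then follows immediately from $\int_{\mathbb{R}}U^{2}\geq\int_0^T U^{2}=\|u\|_{\alpha}^{2}$ (the ``tail'' of $U$ on $(T,\infty)$ is non-negative). For the upper bound I would combine Cauchy--Schwarz on $[0,T]$, which gives
\[
-\int_0^T UV\,dt\leq \Bigl(\int_0^T U^{2}\,dt\Bigr)^{1/2}\Bigl(\int_0^T V^{2}\,dt\Bigr)^{1/2}=\|u\|_{\alpha}\,\|V\|_{L^{2}(0,T)},
\]
with the fact that the same Fourier computation applied with the roles of $U$ and $V$ reversed produces an analogous bound $\|V\|_{L^{2}(0,T)}\leq \|u\|_{\alpha}/|\cos(\pi\alpha)|$; substituting gives the desired constant $1/|\cos(\pi\alpha)|$. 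The main obstacle in implementing the plan is precisely this last step: it requires controlling $\|V\|_{L^{2}(0,T)}$ by $\|u\|_{\alpha}=\|U\|_{L^{2}(0,T)}$, which is not transparent because $V$ has a non-trivial tail on $(-\infty,0)$ that the zero-extension argument does not directly suppress. Handling this ``tail estimate'' by exploiting the symmetric structure of the Fourier identity (namely that the right-hand side is symmetric in $U$ and $V$ while only $u(0)=0$ was essential for identifying $U$ with ${}_0^{c}D^{\alpha}u$ on $[0,T]$) is the delicate technical core of the proof.
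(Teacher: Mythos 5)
The paper itself does not prove Lemma \ref{lem2.2}; it simply refers to \cite{JZ}, where the estimate is obtained exactly along the lines you propose (extension of $u$ by zero to $\mathbb{R}$, the Fourier multiplier identities for the left and right Liouville derivatives, and Plancherel). So your route is the ``official'' one, and everything up to and including the master equality
\[
-\int_0^T {}_0^c D_t^{\alpha}u(t)\cdot {}_t^c D_T^{\alpha}u(t)\,dt=|\cos(\pi\alpha)|\,\|U\|_{L^2(\mathbb{R})}^2
\]
is correct (modulo the usual care about the regularity of the zero extension $\widetilde u$, which is only Lipschitz rather than smooth; this is harmless for $\alpha\le 1$ and is handled in the cited references). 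The lower bound is complete as you wrote it.

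The step you flag as the ``delicate technical core'' --- controlling $\|V\|_{L^{2}(0,T)}$ despite the tail of $V$ on $(-\infty,0)$ --- does not require any new tail estimate: it follows from the identities you have already written down. By Plancherel, $\|V\|_{L^2(\mathbb{R})}=\|U\|_{L^2(\mathbb{R})}$, since both equal $\bigl(\int_{\mathbb{R}}|\xi|^{2\alpha}|\widehat{\widetilde u}(\xi)|^2\,d\xi\bigr)^{1/2}$. Hence, combining the master equality with Cauchy--Schwarz on $[0,T]$,
\[
|\cos(\pi\alpha)|\,\|U\|_{L^2(\mathbb{R})}^2=-\int_0^T UV\,dt\le \|U\|_{L^2(0,T)}\,\|V\|_{L^2(0,T)}\le \|U\|_{L^2(0,T)}\,\|V\|_{L^2(\mathbb{R})}=\|U\|_{L^2(0,T)}\,\|U\|_{L^2(\mathbb{R})},
\]
so that $\|U\|_{L^2(\mathbb{R})}\le \|U\|_{L^2(0,T)}/|\cos(\pi\alpha)|=\|u\|_{\alpha}/|\cos(\pi\alpha)|$. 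Substituting this back into the master equality yields
\[
-\int_0^T {}_0^c D_t^{\alpha}u(t)\cdot {}_t^c D_T^{\alpha}u(t)\,dt=|\cos(\pi\alpha)|\,\|U\|_{L^2(\mathbb{R})}^2\le \frac{1}{|\cos(\pi\alpha)|}\,\|u\|_{\alpha}^2,
\]
which is exactly the desired upper bound. In short, the symmetry you identified ($\|U\|_{L^2(\mathbb{R})}=\|V\|_{L^2(\mathbb{R})}$) is indeed the key, and it closes the argument in two lines; there is no remaining obstacle.
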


See \cite{JZ} for details.

\subsection{A critical points result for differentiable functionals}

\label{subsec:ricceri} In order to prove our main result, stated in Theorem~%
\ref{generalkernel0f}, in the following we will perform the variational
principle of Ricceri established in \cite{Ricceri}. For the sake of clarity,
we recall it here below in the form given in \cite{BMB}.

\begin{theorem}
\label{BMB} Let $Y$ be a reflexive real Banach space, and $\Phi,\Psi:Y\to%
\mathbb{R}$ be two G\^{a}teaux differentiable functionals such that $\Phi$
is strongly continuous, sequentially weakly lower semicontinuous and
coercive in $Y$ and $\Psi$ is sequentially weakly upper semicontinuous in $Y$%
. Let $J_\mu$ be the functional defined as $J_\mu:=\Phi-\mu\Psi$, $\mu\in 
\mathbb{R}$\,, and for any ${\displaystyle r>\inf_Y\Phi}$ let $\varphi$ be
the function defined as 
\begin{equation*}
\varphi(r):=\inf_{u\in\Phi^{-1}\big((-\infty,r)\big)} \frac{\displaystyle%
\sup_{v\in\Phi^{-1}\big((-\infty,r)\big)}\Psi(v)-\Psi(u)}{r-\Phi(u)}\,\,.
\end{equation*}

\noindent Then, for any ${\displaystyle r>\inf_Y\Phi}$ and any $\mu\in
(0,1/\varphi(r))$\footnote{%
Note that, by definition, $\varphi(r)\geq 0$ for any ${\displaystyle %
r>\inf_Y\Phi}$\,. Here and in the following, if $\varphi(r)=0$, by $%
1/\varphi(r)$ we mean $+\infty$, i.e. we set $1/\varphi(r)=+\infty$\,.}, the
restriction of the functional $J_\mu$ to $\Phi^{-1}\big((-\infty,r)\big)$
admits a global minimum, which is a critical point $($precisely a local
minimum$)$ of $J_\mu$ in $Y$.
\end{theorem}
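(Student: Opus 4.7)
The plan is to combine a standard weak-compactness minimization argument with a careful exploitation of the definition of $\varphi(r)$, so as to force the candidate minimizer into the (weakly) open sub-level set $S_r:=\Phi^{-1}((-\infty,r))$, where being a local minimum of the differentiable functional $J_\mu$ automatically makes it a critical point.

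The first step is to establish existence of a minimum. Since $\Phi$ is coercive, $S_r$ is bounded; reflexivity of $Y$ then gives relative weak compactness. Because $\Phi$ is sequentially weakly lower semicontinuous and $\Psi$ is sequentially weakly upper semicontinuous, for each $\mu>0$ the functional $J_\mu=\Phi-\mu\Psi$ is sequentially weakly lower semicontinuous on $Y$. Set $m:=\inf_{S_r}J_\mu$ and pick a minimizing sequence $\{u_n\}\subset S_r$. By coercivity $\{u_n\}$ is bounded, and by reflexivity we may pass to a subsequence with $u_n\rightharpoonup u_0$ in $Y$. Strong continuity of $\Phi$ gives $\Phi(u_n)\to\Phi(u_0)$, hence $\Phi(u_0)\le r$.

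The delicate step is to exclude the boundary case $\Phi(u_0)=r$; this is where the hypothesis $\mu<1/\varphi(r)$ enters. If $\Phi(u_0)<r$, then $u_0\in S_r$ and weak lower semicontinuity of $J_\mu$ forces $J_\mu(u_0)\le m$, whence $u_0$ is a global minimum of $J_\mu$ on $S_r$. Suppose instead $\Phi(u_0)=r$. From $J_\mu(u_n)\to m$ together with $\Phi(u_n)\to r$ one extracts
\begin{equation*}
\Psi(u_n)=\frac{\Phi(u_n)-J_\mu(u_n)}{\mu}\longrightarrow\frac{r-m}{\mu};
\end{equation*}
since $u_n\in S_r$ for every $n$, passing the inequality $\Psi(u_n)\le\sup_{v\in S_r}\Psi(v)$ to the limit yields $(r-m)/\mu\le\sup_{v\in S_r}\Psi(v)$. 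On the other hand, $\mu<1/\varphi(r)$ means $\varphi(r)<1/\mu$, so by definition of the infimum there exists $u_*\in S_r$ with $\mu\bigl(\sup_{v\in S_r}\Psi(v)-\Psi(u_*)\bigr)<r-\Phi(u_*)$, that is, $J_\mu(u_*)<r-\mu\sup_{v\in S_r}\Psi(v)$. Since $u_*\in S_r$ we have $m\le J_\mu(u_*)$, which rearranges to $(r-m)/\mu>\sup_{v\in S_r}\Psi(v)$, contradicting the previous inequality. Hence $\Phi(u_0)<r$, $u_0\in S_r$, and $J_\mu(u_0)=m$.

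Finally, $S_r$ is weakly open (because $\Phi$ is strongly continuous), so $u_0$ is a local minimum of $J_\mu$ on the whole of $Y$, and G\^{a}teaux differentiability then gives $J_\mu'(u_0)=0$. The main obstacle is precisely the \emph{strict} inequality $\Phi(u_0)<r$: one is tempted to bound $\Psi(u_0)$ through weak upper semicontinuity, but this only yields $\Psi(u_0)\ge\limsup\Psi(u_n)$, the wrong direction. The resolution is that, if $\Phi(u_0)=r$, the value of $\Psi(u_n)$ is \emph{forced} to the explicit limit $(r-m)/\mu$ by the arithmetic identity $\Psi=(\Phi-J_\mu)/\mu$, and this concrete value clashes with what the definition of $\varphi(r)$ produces once $\mu\varphi(r)<1$.
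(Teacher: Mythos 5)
The paper does not actually prove this theorem; it is quoted verbatim from \cite{BMB} (a refined form of Ricceri's variational principle \cite{Ricceri}), so your proposal has to be judged on its own. Your strategy --- direct minimization of $J_\mu$ over the sublevel set $S_r=\Phi^{-1}((-\infty,r))$ via weak compactness, followed by using $\mu\varphi(r)<1$ to exclude a minimizer sitting on the boundary $\{\Phi=r\}$ --- is exactly the standard route, and your central contradiction is sound: in the boundary case the identity $\Psi(u_n)=(\Phi(u_n)-J_\mu(u_n))/\mu$ forces $\Psi(u_n)\to(r-m)/\mu\le\sup_{S_r}\Psi$, while the test point $u_*$ furnished by $\varphi(r)<1/\mu$ gives $m\le J_\mu(u_*)<r-\mu\sup_{S_r}\Psi$, i.e.\ $\sup_{S_r}\Psi<(r-m)/\mu$.

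Two of your justifications are, however, false as written and need repair. First, from $u_n\rightharpoonup u_0$ you cannot deduce $\Phi(u_n)\to\Phi(u_0)$ by ``strong continuity'': continuity for the norm topology says nothing about weakly convergent sequences (in the paper's application $\Phi$ is comparable to $\|\cdot\|_\alpha^2$, which is certainly not sequentially weakly continuous). Fortunately the facts you actually use survive: sequential weak lower semicontinuity gives $\Phi(u_0)\le\liminf_n\Phi(u_n)\le r$, and in the boundary case the squeeze $r=\Phi(u_0)\le\liminf_n\Phi(u_n)\le\limsup_n\Phi(u_n)\le r$ (the last inequality because $\Phi(u_n)<r$) yields $\Phi(u_n)\to r$, which is all your limit computation requires. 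Second, $S_r$ is \emph{not} weakly open --- a bounded set in an infinite-dimensional space never is; what strong continuity of $\Phi$ gives is that $S_r$ is open for the \emph{norm} topology, and that is precisely what you need: $u_0$ is then a norm-local minimum of $J_\mu$, and G\^{a}teaux differentiability along lines yields $J_\mu'(u_0)=0$. Finally, you should dispose of the degenerate case $\sup_{S_r}\Psi=+\infty$, in which $\varphi(r)=+\infty$ and the admissible parameter range is empty, so there is nothing to prove; with that remark and the two repairs above your argument is complete.
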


\section{The Main Results}

\label{sec:main} This section is devoted to the proof of the main result of
the present paper, that is the following.

\begin{theorem}
\label{generalkernel0f} Let $f:\mathbb{R}\rightarrow \mathbb{R}$ be a
continuous function and $\alpha \in (1/2, 1]$. Set 
\begin{equation*}
\kappa_\alpha:=\frac{T^{2\alpha}}{\Gamma(\alpha)^2 |\cos(\pi \alpha)|({%
2\alpha-1})},
\end{equation*}
where $\Gamma$ is the Euler function. Assume that 
\begin{equation*}
\sup_{\gamma>0}\frac{\gamma^2}{\displaystyle \max_{|\xi|\leq
\gamma}\int_0^\xi f(t)dt}>\kappa_\alpha.\eqno{(S_G)}
\end{equation*}
Then problem $(F_f)$ admits at least one solution in $E^\alpha_0$.
\end{theorem}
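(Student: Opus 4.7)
The plan is to apply Ricceri's variational principle (Theorem \ref{BMB}) with $Y := E_0^\alpha$ and parameter $\mu = 1$, under the decomposition
$$\Phi(u) := -\int_0^T {}_0^c D_t^\alpha u(t) \cdot {}_t^c D_T^\alpha u(t)\,dt, \qquad \Psi(u) := \int_0^T F(u(t))\,dt,$$
with $F(\xi) := \int_0^\xi f(t)\,dt$. The first, largely routine, block is to verify the abstract hypotheses: by Lemma \ref{lem2.2} the quadratic form $\Phi$ is equivalent to $\|\cdot\|_\alpha^2$, hence $C^1$, convex, strongly continuous, sequentially weakly lower semicontinuous, and coercive on the Hilbert space $E_0^\alpha$; by the compact embedding of Lemma \ref{EM}, $\Psi$ is sequentially weakly continuous and $C^1$. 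A standard fractional integration by parts via Proposition \ref{prop2.1} then identifies the critical points of $J_1 := \Phi - \Psi$ with solutions of $(F_f)$ in the sense of Remark \ref{rmk2.1}.

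The quantitative core is to choose the truncation level $r$ in Theorem \ref{BMB}. Using $(S_G)$, I would fix $\gamma^* > 0$ with
$$(\gamma^*)^2 > \kappa_\alpha \max_{|\xi| \leq \gamma^*} F(\xi),$$
and set $r := T(\gamma^*)^2 / \kappa_\alpha$. Combining the lower bound in Lemma \ref{lem2.2} with the $L^\infty$-estimate from Lemma \ref{lem2.1} yields the sharp implication
$$\|u\|_\infty^2 \leq \frac{\kappa_\alpha}{T}\,\Phi(u),$$
so that every $u$ with $\Phi(u) < r$ satisfies $\|u\|_\infty < \gamma^*$, and therefore
$$\sup_{v \in \Phi^{-1}((-\infty, r))} \Psi(v) \leq T \max_{|\xi| \leq \gamma^*} F(\xi).$$

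Testing the infimum defining $\varphi(r)$ on $u \equiv 0$ (for which $\Phi(0) = \Psi(0) = 0$, so $r > \inf_Y \Phi$ is automatic) gives
$$\varphi(r) \leq \frac{T \max_{|\xi| \leq \gamma^*} F(\xi)}{r} = \frac{\kappa_\alpha \max_{|\xi| \leq \gamma^*} F(\xi)}{(\gamma^*)^2} < 1,$$
by the choice of $\gamma^*$. Consequently $1 \in (0, 1/\varphi(r))$, and Theorem \ref{BMB} produces a local minimizer of $J_1$ inside $\Phi^{-1}((-\infty, r))$, which is the sought solution of $(F_f)$.

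I expect the main obstacle to be the bookkeeping of the constants $|\cos(\pi\alpha)|$, $\Gamma(\alpha)$, $T^{\alpha - 1/2}$ and $\sqrt{2\alpha-1}$ coming from Lemmas \ref{lem2.1} and \ref{lem2.2}: they have to collapse exactly into $\kappa_\alpha$ so that the algebraic inequality $(S_G)$ is both necessary and sufficient to force $\varphi(r) < 1$. A secondary technical point is to confirm that a critical point of $J_1$, read through fractional integration by parts, indeed delivers an $AC$ function whose truncated flux is a.e.\ classically differentiable on $[0,T]$, as required by Remark \ref{rmk2.1}.
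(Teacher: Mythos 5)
Your proposal follows essentially the same route as the paper: the same splitting $\Phi-\Psi$ on $E_0^\alpha$ with $\mu=1$, the same choice of level (your $r=T(\gamma^*)^2/\kappa_\alpha$ coincides with the paper's $r=|\cos(\pi\alpha)|\bar\gamma^2/c^2$ since $\kappa_\alpha=c^2T/|\cos(\pi\alpha)|$), the same $L^\infty$-bound on the sublevel set, and the same test at $u\equiv 0$ giving $\varphi(r)<1$. The only step you gloss over --- that a critical point of $J_1$ actually solves $(F_f)$ in the sense of Remark \ref{rmk2.1} --- is handled in the paper by testing against $\sin(2k\pi t/T)$ and $1-\cos(2k\pi t/T)$ and invoking the fundamental lemma via Fourier series, but this is exactly the ``secondary technical point'' you flagged and does not change the argument.
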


\begin{proof}
The idea of the proof consists in applying \cite[Theorem~2.1; part~$a)$]{BMB}%
) taking $Y:=E_0^{\alpha}$.

Hence, for given $u \in E_0^{\alpha}$, we define functionals $\Phi, \Psi :
E^{\alpha}_0 \to \mathbb{R}$ as follows: 
\begin{equation*}
\Phi(u) := - \int_0^T {}_0^c D_t^{\alpha} u(t) \cdot {}_t^c D_T^{\alpha}
u(t) dt, \,\,\,\,\,\mathrm{and}\,\,\,\,\, \Psi(u) := \int_0^T F(u(t))dt,
\end{equation*}
where $F(\xi):=\displaystyle \int_0^\xi f(s)ds,$ for every $\xi\in\mathbb{R}$%
.

\indent Clearly, $\Phi $ and $\Psi $ are continuously G\^{a}teaux
differentiable functional whose G\^{a}teaux derivatives at the point $u\in
E_{0}^{\alpha }$ are given by 
\begin{gather*}
\Phi ^{\prime }(u)(v)=-\int_{0}^{T}({}_{0}^{c}D_{t}^{\alpha }u(t)\cdot
{}_{t}^{c}D_{T}^{\alpha }v(t)+{}_{t}^{c}D_{T}^{\alpha }u(t)\cdot
{}_{0}^{c}D_{t}^{\alpha }v(t))dt, \\
\Psi ^{\prime
}(u)(v)=\int_{0}^{T}f(u(t))v(t)dt=-\int_{0}^{T}\int_{0}^{t}f(u(s))ds\cdot
v^{\prime }(t)dt,
\end{gather*}%
for every $v\in E_{0}^{\alpha }$.

Moreover, it is easy to see that 
\begin{equation*}
\Phi ^{\prime }(u)(v)=\int_{0}^{T}({}_{0}D_{t}^{\alpha
-1}({}_{0}^{c}D_{t}^{\alpha }u(t))-{}_{t}D_{T}^{\alpha
-1}({}_{t}^{c}D_{T}^{\alpha }u(t)))\cdot v^{\prime }(t)dt.
\end{equation*}%
Thus, the functional $J:=\Phi -\Psi \in C^{1}(E_{0}^{\alpha },\mathbb{R})$
and the functionals $\Phi $ and $\Psi $ are respectively sequentially weakly
lower and upper semicontinuous. As concerns functional $\Phi $, this follows
by Lemma \ref{EM}. Indeed $\Phi$ is strongly continuous and convex and hence
sequentially weakly lower semicontinuous. Moreover, the weakly upper
semicontinuity the functional $\Psi$ can be proved arguing in a standard way
by using again the compact embedding $E_{0}^{\alpha }\hookrightarrow
C^0([0,T],\mathbb{R})$.

Further, from Lemma \ref{lem2.2}, the functional $\Phi$ is coercive. Indeed,
one has 
\begin{equation*}
\Phi(u):= - \int_0^T {}_0^c D_t^{\alpha} u(t) \cdot {}_t^c D_T^{\alpha} u(t)
dt \geq|\cos (\pi \alpha)| \|u\|_{\alpha}^2\rightarrow +\infty,
\end{equation*}
as $\|u\|_{\alpha}\rightarrow +\infty$.

Moreover, a critical point of the functional $J$ is a solution of $(F_f)$.

\noindent Indeed, if $u_* \in E_0^{\alpha}$ is a critical point of $J,$ then 
\begin{equation}
\begin{aligned} 0 = J'(u_*)(v) &=\int_0^T \Big({}_0 D_t^{\alpha-1}({}_0^c
D_t^{\alpha} u_*(t)) - {}_t D_T^{\alpha-1}({}_t^c D_T^{\alpha} u_*(t)) \\
&\quad + \int_0^t f(u_*(s))ds\Big) v'(t) dt, \end{aligned}  \label{e3.1}
\end{equation}
for every $v \in E_0^{\alpha}$.

Now, we can choose $v \in E_0^{\alpha}$ such that 
\begin{equation*}
v(t) := \sin \frac{2k\pi t}{T} \quad \text{ or} \quad v(t) := 1 - \cos \frac{%
2k\pi t}{T}, \quad (k = 1, 2, \dots).
\end{equation*}
The theory of Fourier series and \eqref{e3.1} imply 
\begin{equation}
{}_0 D_t^{\alpha-1}({}_0^c D_t^{\alpha} u_*(t)) - {}_t D_T^{\alpha-1}({}_t^c
D_T^{\alpha} u_*(t)) + \int_0^t f(u_*(s))ds = \kappa  \label{e3.2}
\end{equation}
a.e. on $[0, T]$ for some $\kappa \in \mathbb{R}$.

By \eqref{e3.2}, it is easy to show that $u_* \in E_0^{\alpha}$ is a
solution of $(F_f)$. Now, we look on the existence of a critical point of
the functional $J$ in $E_0^{\alpha}$.

Since condition $(S_G)$ holds, there exists $\bar \gamma>0$ such that 
\begin{equation}  \label{ser1}
\frac{\bar\gamma^2}{\displaystyle \max_{|\xi|\leq \bar\gamma}\int_0^\xi
f(t)dt}>\frac{T^{2\alpha}}{\Gamma(\alpha)^2 |\cos(\pi \alpha)|({2\alpha-1})}.
\end{equation}
\indent Now, by Lemma \ref{lem2.1} (note that $\alpha > 1/2$), for each $u
\in E_0^{\alpha}$ we have 
\begin{equation}
\|u\|_{\infty} \le c\Big(\int_0^T |_0^c D_t^{\alpha} u(t)|^2 dt \Big)^{1/2}
= c \|u\|_{\alpha},  \label{e3.3}
\end{equation}
where 
\begin{equation*}
c:= \frac{T^{\alpha-\frac{1}{2}}}{\Gamma(\alpha) \sqrt{2\alpha-1}}.
\end{equation*}
\indent Hence, set 
\begin{equation}  \label{e3.7}
\displaystyle r:=\frac{|\cos(\pi \alpha)|}{c^2}\bar \gamma^2.
\end{equation}
\indent Moreover, for every $u \in E_0^{\alpha}$ such that $u \in
\Phi^{-1}((-\infty, r))$, by Lemma \ref{e2.7} we have 
\begin{equation*}
|\cos (\pi \alpha)| \|u\|_{\alpha}^2 \le \Phi(u) < r,
\end{equation*}
which implies 
\begin{equation}
\|u\|_{\alpha}^2 <\frac{r}{|\cos(\pi \alpha)|} .  \label{e3.8}
\end{equation}
\indent Thus, by \eqref{e3.3}, \eqref{e3.7} and \eqref{e3.8} we obtain 
\begin{equation*}
|u(t)| \leq c \|u\|_{\alpha} < c \sqrt{\frac{r}{|\cos(\pi \alpha)|}} = \bar
\gamma, \quad \forall\; t \in [0, T].
\end{equation*}
\indent Hence, 
\begin{equation*}
\Psi(u) = \int_0^T F(u(t)) dt \le \int_0^T \max _{|\xi| \le \bar \gamma}
F(\xi) dt = T \max _{|\xi| \le \bar \gamma} F(\xi),
\end{equation*}
for every $u \in E_0^{\alpha}$ such that $u \in \Phi^{-1}((-\infty, r))$.

Then, 
\begin{equation*}
\sup _{u \in \Phi^{-1}((-\infty, r))} \Psi(u) \le T \max _{|\xi| \le \bar
\gamma} F(\xi).
\end{equation*}

Taking into account the above computations and remarks, one has the
following inequalities 
\begin{eqnarray*}
\varphi(r) &=& \inf_{u \in \Phi^{-1}((-\infty, r))}\frac{\displaystyle \sup
_{v \in \Phi^{-1}((-\infty, r))} \Psi(v)- \Psi(u)}{r-\Phi(u)}  \notag \\
&\leq & \frac{\displaystyle \sup _{v \in \Phi^{-1}((-\infty, r))} \Psi(v)}{r}
\\
&\leq& \frac{c^2T}{|\cos(\pi \alpha)|} \frac{\displaystyle\max_{|\xi|\leq
\bar \gamma}F(\xi)}{\bar \gamma^2}.
\end{eqnarray*}

Thus, it follows that 
\begin{equation}  \label{ser2}
\varphi(r)\leq \kappa_\alpha \frac{\displaystyle\max_{|\xi|\leq \bar
\gamma}F(\xi)}{\bar \gamma^2},
\end{equation}
observing that 
\begin{equation*}
\kappa_\alpha=\frac{c^2T}{|\cos(\pi \alpha)|}.
\end{equation*}
\indent Consequently, by \eqref{ser1} and \eqref{ser2} one has $\varphi(r)<1$%
. Hence, since $1\in\left(0,{1}/{\varphi(r)}\right)$, Theorem \ref{BMB}
ensures that the functional $J$ admits at least one critical point (local
minima) $\widetilde{u}\in \Phi^{-1}((-\infty,r))$. The proof is complete.
\end{proof}

We explicitly note that Theorem \ref{BMB} can be exploited proving the
existence of one solution for the parametric version of problem $(F_f)$,
namely $(F_f^{\mu})$ and given by: 
\begin{equation*}
\begin{gathered} \frac{d}{dt} \Big({}_0 D_t^{\alpha-1}({}_0^c D_t^{\alpha}
u(t)) - {}_t D_T^{\alpha-1}({}_t^c D_T^{\alpha} u(t))\Big) + \mu f(u(t)) =
0, \,\text{a.e. } t \in [0, T] \\ u(0) = u(T) = 0. \end{gathered}
\end{equation*}
\indent More precisely, one has the following existence property.

\begin{theorem}
\label{secondo} Let $f:\mathbb{R}\rightarrow \mathbb{R}$ be a continuous
function and $\alpha \in (1/2, 1]$. Then, for every $\mu$ sufficiently
small, i.e. 
\begin{equation*}
\mu\in\left(0,\frac{1}{\kappa_\alpha}\left(\sup_{\gamma>0}\frac{\gamma^2}{%
\displaystyle\max_{|\xi|\leq \gamma}F(\xi)}\right)\right),
\end{equation*}
problem $(F_f^{\mu})$ admits at least one solution $u_\mu\in E^\alpha_0$.
\end{theorem}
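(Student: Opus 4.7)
The plan is to apply Theorem \ref{BMB} directly to the parameter-dependent energy $J_\mu := \Phi - \mu\Psi$, where $\Phi$ and $\Psi$ are the two functionals already introduced in the proof of Theorem \ref{generalkernel0f}, namely $\Phi(u) := -\int_0^T {}_0^c D_t^\alpha u(t)\cdot{}_t^c D_T^\alpha u(t)\,dt$ and $\Psi(u):=\int_0^T F(u(t))\,dt$, with $F(\xi)=\int_0^\xi f(s)\,ds$. The structural hypotheses of Theorem \ref{BMB} on $Y:=E_0^\alpha$ have already been checked in the proof of Theorem \ref{generalkernel0f}: $\Phi$ is continuously G\^{a}teaux differentiable, strongly continuous, convex (hence sequentially weakly lower semicontinuous) and coercive via $\Phi(u)\geq|\cos(\pi\alpha)|\|u\|_\alpha^2$, while $\Psi$ is continuously G\^{a}teaux differentiable and sequentially weakly upper semicontinuous thanks to the compact embedding of Lemma \ref{EM}. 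The Fourier-series argument used for Theorem \ref{generalkernel0f}, carried out with $f$ replaced by $\mu f$, shows that any critical point of $J_\mu$ solves $(F_f^\mu)$ in the sense of Remark \ref{rmk2.1}.

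The real content of the proof is the choice of the truncation radius. Since $\mu$ lies in the prescribed interval, by the definition of supremum there exists $\bar\gamma>0$ with
$$\mu\,\kappa_\alpha\max_{|\xi|\leq\bar\gamma}F(\xi)<\bar\gamma^2.$$
Setting $r:=|\cos(\pi\alpha)|\bar\gamma^2/c^2$ with $c:=T^{\alpha-1/2}/(\Gamma(\alpha)\sqrt{2\alpha-1})$, Lemma \ref{lem2.2} gives $\|u\|_\alpha^2<r/|\cos(\pi\alpha)|$ for every $u\in\Phi^{-1}((-\infty,r))$, and then Lemma \ref{lem2.1} forces $\|u\|_\infty<\bar\gamma$ on such $u$, so that $\sup_{u\in\Phi^{-1}((-\infty,r))}\Psi(u)\leq T\max_{|\xi|\leq\bar\gamma}F(\xi)$. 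Exactly the computation used in Theorem \ref{generalkernel0f} then yields
$$\varphi(r)\leq\kappa_\alpha\,\frac{\max_{|\xi|\leq\bar\gamma}F(\xi)}{\bar\gamma^2},$$
and the choice of $\bar\gamma$ gives $\mu\varphi(r)<1$, that is $\mu\in(0,1/\varphi(r))$. Theorem \ref{BMB} then produces a critical point $u_\mu\in\Phi^{-1}((-\infty,r))$ of $J_\mu$ in $E_0^\alpha$, which is the claimed solution of $(F_f^\mu)$.

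There is essentially no new obstacle compared with the proof of Theorem \ref{generalkernel0f}: the structural hypothesis $(S_G)$ is replaced by the smallness of $\mu$, which is precisely what guarantees that the product $\mu\varphi(r)$, rather than $\varphi(r)$ itself, stays strictly below one. The only mild subtlety is the strict inequality defining the admissible range of $\mu$, which is exactly what allows $\bar\gamma$ to be selected so that the estimate on $\mu\varphi(r)$ is strict; apart from that, the proof is a direct parametric recasting of the one already carried out.
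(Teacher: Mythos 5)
Your proposal is correct and follows essentially the same route as the paper's own proof: choose $\bar\gamma$ from the strict inequality defining the range of $\mu$, set $r=|\cos(\pi\alpha)|\bar\gamma^2/c^2$, reuse the estimates from Theorem \ref{generalkernel0f} to get $\varphi(r)<1/\mu$, and invoke Theorem \ref{BMB}. No substantive differences to report.
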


\begin{proof}

Let us take 
\begin{equation*}
0<\mu<\frac{1}{\kappa_\alpha}\left(\sup_{\gamma>0}\frac{\gamma^2}{%
\displaystyle\max_{|\xi|\leq \gamma}F(\xi)}\right).
\end{equation*}
Hence, there exists $\bar\gamma>0$ such that 
\begin{equation}  \label{ser12345}
\kappa_\alpha\mu<\frac{\bar\gamma^2}{\displaystyle \max_{|\xi|\leq
\bar\gamma}F(\xi)}.
\end{equation}
Set 
\begin{equation}  \label{e3.7345}
\displaystyle r:=\frac{|\cos(\pi \alpha)|}{c^2}\bar \gamma^2.
\end{equation}
Preserving the notations as in the proof of Theorem \ref{generalkernel0f},
one has 
\begin{equation*}
\varphi(r)\leq\frac{\displaystyle \sup _{v \in \Phi^{-1}((-\infty, r))}
\Psi(v)}{r}\leq \frac{c^2T}{|\cos(\pi \alpha)|} \frac{\displaystyle%
\max_{|\xi|\leq \bar \gamma}F(\xi)}{\bar \gamma^2}<\frac{1}{\mu}.
\end{equation*}
Hence, since $\mu\in\left(0,{1}/{\varphi(r)}\right)$, Theorem \ref{BMB}
ensures that the functional $J_\mu$ admits at least one critical point
(local minima) $u_\mu\in \Phi^{-1}((-\infty,r))$. The proof is complete.
\end{proof}

\begin{remark}
\label{unb}\textrm{{\ Our variational approach consist in looking for
critical points of the functional~$J_{\mu}$ naturally associated with
problem $(F_f^{\mu})$. We would like to note that, in general, $J_{\mu}$ can
be unbounded from below in $E_0^\alpha$.}}

\textrm{Indeed, for instance, in the case when $f(t):=1+|t|^{q-2}t$ with $%
q\in (2, +\infty)$, for any fixed $u\in E_0^\alpha\setminus\{0\}$ and $%
\tau\in\mathbb{R}$, we get 
\begin{equation*}
\begin{aligned} J_{\mu}(\tau u) & = \Phi(\tau u)-\mu\int_0^{T} F(\tau
u(t))\,dt\\ & \leq \frac{\tau^2}{|\cos (\pi \alpha)|} \|u\|_{\alpha}^2-\mu
\tau\|u\|_{L^1}-\frac{\mu \tau^q}{q}\|u\|_{L^q}^q \to -\infty \end{aligned}
\end{equation*}
as $\tau\to +\infty$.}

\textrm{Hence, in order to find critical points of $J_{\mu}$ we can not
argue, in general, by direct minimization. }
\end{remark}

\section{Some Comments}

\label{sec:comm} In this section we give some remarks and a concrete example
of application of our results.

\begin{remark}
\label{Y1}\textrm{{If in Theorem \ref{generalkernel0f} the function $f$ is
non-negative, condition $(S_G)$ assumes the more simple and significative
form 
\begin{equation*}
\sup_{\gamma>0}\frac{\gamma^2}{\displaystyle \int_0^\gamma f(s)ds}%
>\kappa_\alpha.\eqno{(S_G')}
\end{equation*}
Moreover, if the following assumption is verified 
\begin{equation*}
\limsup_{\xi\rightarrow +\infty}\frac{\xi^2}{\displaystyle \int_0^\xi f(s)ds}%
>\kappa_\alpha,\eqno{(S_\infty)}
\end{equation*}
then, condition $(S_G^{\prime })$ automatically holds. } }
\end{remark}

\begin{remark}
\label{Y2}\textrm{{Let $\bar \gamma>0$ be a real constant such that 
\begin{equation*}
\frac{{\bar\gamma}^2}{\displaystyle \max_{|\xi|\leq \bar \gamma}\int_0^{\xi}
f(s)ds}>\kappa_\alpha,
\end{equation*}
and said $\widetilde{u}\in E^\alpha_0$ be the solution of problem $(F_f)$
obtained by using Theorem \ref{BMB}. Hence, since $\widetilde{u}%
\in\Phi^{-1}((-\infty,r))$, it follows that $\|\widetilde{u}\|_\infty\leq
\bar \gamma. $ } }
\end{remark}

\begin{remark}
\label{atzero}\textrm{{If in Theorem \ref{secondo} one has $f(0)\neq 0$,
then the obtained solution is clearly non-trivial. On the other hand, the
non-triviality of the solution can be achieved also in the case $f(0)=0$
requiring the additional condition at zero 
\begin{equation}  \label{ZeRo}
\lim_{t\rightarrow 0^+}\frac{F(t)}{t^2}=+\infty.
\end{equation}
}}

\textrm{\indent Indeed, let $0<\bar\mu<\mu^{\star}$. Then, there exists $%
\bar\gamma>0$ such that 
\begin{equation}  \label{ser123}
\kappa_\alpha\bar\mu<\frac{\bar\gamma^2}{\displaystyle \max_{|\xi|\leq
\bar\gamma}F(\xi)}.
\end{equation}
}

\textrm{Thanks to Theorem \ref{BMB}, for every $\mu\in (0,\bar\mu)$ there
exists a critical point of $J_\mu$ such that 
\begin{equation*}
u_\mu\in\Phi^{-1}((-\infty,r_{\bar\mu})),
\end{equation*}
where 
\begin{equation*}
\displaystyle r_{\bar\mu}:=\frac{|\cos(\pi \alpha)|}{c^2}\bar \gamma^2,
\end{equation*}
\indent In particular, $u_\mu$ is a global minimum of the restriction of $%
J_\mu$ to $\Phi^{-1}((-\infty,r_{\bar\mu}))$.}

\textrm{We will prove that the function $u_\mu$ cannot be trivial. To this
end, let us show that 
\begin{equation}  \label{inf}
\limsup_{\|u\|_\alpha\rightarrow 0^+}\frac{\Psi(u)}{\Phi(u)}=+\infty.
\end{equation}
\indent Due to our assumptions at zero, we can fix a sequence $%
\{\xi_j\}\subset \mathbb{R}^{+}$ converging to zero and two constant $\sigma$%
, and $\kappa$ $($with $\sigma>0$$)$ such that 
\begin{equation*}
\lim_{j\rightarrow \infty}\frac{\displaystyle F(\xi_j)}{\xi_j^2}=+\infty,
\end{equation*}
and 
\begin{equation*}
F(\xi)\geq \kappa \xi^2,
\end{equation*}
\noindent for every $\xi\in [0,\sigma]$.}

\textrm{\noindent Now, fix a function $v\in E^\alpha_0$ $($note that $%
C^{\infty}[0, T]\subset E^\alpha_0$$)$ such that:}

\begin{itemize}
\item[i)] \textrm{$v(t)\in [0,1]$, for every $t\in [0,T]$; }

\item[ii)] \textrm{$v(t)=1$, for every $t\in [\frac{T}{4},\frac{3T}{4}]$. }
\end{itemize}

\textrm{Hence, fix $M>0$ and consider a real positive number $\eta$ with }

\textrm{%
\begin{equation*}
M<\frac{\displaystyle \eta T+\kappa\int_{[0,T]\setminus [\frac{T}{4},\frac{3T%
}{4}]}v(t)^2dt}{\Phi(v)}.
\end{equation*}
}

\textrm{\indent Then, there is $\nu\in\mathbb{N}$ such that $\xi_j<\sigma$
and 
\begin{equation*}
\int_0^{\xi_j}f(s)ds\geq 2\eta\xi_j^2,
\end{equation*}
\noindent for every $j>\nu$.}

\textrm{\indent At this point, for every $j>\nu$, and bearing in mind the
properties of the function $v$ $($$0\leq\xi_jv(t)<\sigma$ for $j$
sufficiently large$)$, one has 
\begin{eqnarray*}
\frac{\Psi(\xi_j v)}{\Phi(\xi_j v)} &=& \frac{\displaystyle \int_{[\frac{T}{4%
},\frac{3T}{4}]}\left(\displaystyle\int_0^{\xi_j}f(s)ds\right)dt+\int_{[0,T]%
\setminus [\frac{T}{4},\frac{3T}{4}]}F(\xi_jv(t))dt}{\Phi(\xi_j v)}  \notag
\\
&\geq& \frac{\displaystyle \eta T+\kappa\int_{[0,T]\setminus [\frac{T}{4},%
\frac{3T}{4}]}v(t)^2dt}{\Phi(v)}>M.
\end{eqnarray*}
\indent Since $M$ could be arbitrarily large, it follows that 
\begin{equation*}
\lim_{j\rightarrow \infty}\frac{\Psi(\xi_j v)}{\Phi(\xi_j v)}=+\infty,
\end{equation*}
\noindent from which \eqref{inf} clearly follows.}

\textrm{Hence, there exists a sequence $\{w_j\}\subset E^\alpha_0$ strongly
converging to zero, such that, for every $j$ sufficiently large, $w_j\in
\Phi^{-1}((-\infty,r_{\bar\mu}))$, and 
\begin{equation*}
J_\mu(w_j):=\Phi(w_j)-\mu\Psi(w_j)<0.
\end{equation*}
\indent Since $u_\mu$ is a global minimum of the restriction of $J_\mu$ to $%
\Phi^{-1}((-\infty,r_{\bar\mu}))$, we conclude that 
\begin{equation}  \label{ne}
J_\mu(u_\mu)<0,
\end{equation}
so that $u_\mu$ is not trivial. }
\end{remark}

\begin{remark}
\label{behavious}\textrm{{\ Put 
\begin{equation*}
\mu^{\star}:=\frac{1}{\kappa_\alpha}\left(\sup_{\gamma>0}\frac{\gamma^2}{%
\displaystyle\max_{|\xi|\leq \gamma}F(\xi)}\right).
\end{equation*}
\indent From \eqref{ne} we easily see that the map 
\begin{equation}  \label{mapnegative}
(0, \mu^\star)\ni \mu\mapsto J_{\mu}(u_\mu)\,\,\, \mbox{is negative}.
\end{equation}
\noindent Further, we claim that 
\begin{equation*}
\displaystyle \lim_{\mu\rightarrow 0^+}{\|u_{\mu}\|_{\alpha}}=0.
\end{equation*}
}}

\textrm{Indeed, let $0<\bar\mu<\mu^{\star}$. Thus, there exists $%
\bar\gamma>0 $ such that 
\begin{equation}  \label{ser1234}
\kappa_\alpha\bar\mu<\frac{\bar\gamma^2}{\displaystyle \max_{|\xi|\leq
\bar\gamma}F(\xi)}.
\end{equation}
}

\textrm{Set 
\begin{equation*}
\displaystyle r_{\bar \mu}:=\frac{|\cos(\pi \alpha)|}{c^2}\bar \gamma^2.
\end{equation*}
Bearing in mind that $\Phi$ is coercive and that for every $\mu\in
(0,\bar\mu]$ the solution $u_\mu\in \Phi^{-1}((-\infty,r_{\bar \mu}))$, one
has that there exists a positive constant $L$ such that 
\begin{equation*}
\|u_\mu\|_{\alpha}\leq L,
\end{equation*}
for every $\mu\in (0,\bar\mu]$.}

\textrm{Then, one has that 
\begin{equation}  \label{bounded}
\begin{aligned} \left|\int_0^T f(u_{\mu}(t))u_{\mu}(t)dt\right| & \leq
cL\max_{|s|\leq cL}|f(s)|T, \end{aligned}
\end{equation}
\noindent for every $\mu\in (0,\bar\mu]$.}

\textrm{Since $u_\mu$ is a critical point of $J_{\mu}$\,, then $%
J_{\mu}^{\prime }(u_\mu)(v)=0$, for any $v \in E_0^\alpha$ and every $\mu\in
(0,\bar\mu]$\,. In particular $J_{\mu}(u_{\mu})(u_{\mu})=0$, that is 
\begin{equation}  \label{rangle}
\Phi^{\prime }(u_{\mu})(u_{\mu})=\mu\int_0^{T} f(u_{\mu}(t))u_{\mu}(t)dt,
\end{equation}
for every $\mu\in (0,\bar\mu]$. }

\textrm{Then, by \eqref{rangle}, it follows that 
\begin{equation*}
\displaystyle 0\leq 2|\cos (\pi \alpha)|
\|u_\mu\|_{\alpha}^2\leq\Phi^{\prime }(u_\mu)(u_\mu)=\mu\int_\Omega
f(u_\mu(t))u_\mu(t)dt,
\end{equation*}
for any $\mu\in (0, \bar\mu]$. Letting $\mu\to 0^+$, by \eqref{bounded}, we
get $\displaystyle
\lim_{\mu\rightarrow 0^+}{\|u_{\mu}\|_{\alpha}}=0$, as claimed. }

\textrm{Finally, we show that the map 
\begin{equation*}
\mu\mapsto J_{\mu}(u_\mu) \,\,\,\, \mbox{is strictly decreasing
in}\,\,\, (0, \mu^{\star}).
\end{equation*}
For this we observe that for any $u\in E_0^{\alpha}$, one has 
\begin{equation}  \label{J11}
J_{\mu}(u)=\mu\left(\frac{\Phi(u)}{\mu}-\Psi(u)\right).
\end{equation}
Now, let us fix $0<\mu_1<\mu_2\leq\bar\mu<\mu^{\star}$ and let $u_{\mu_i}$
be the global minimum of the functional~$J_{\mu_i}$ restricted to $\Phi\big(%
(-\infty, r_{\bar\mu})\big)$ for $i=1,2$. Also, let 
\begin{equation*}
m_{\mu_i}:=\left(\frac{\Phi(u_{\mu_i})}{\mu_i}-\Psi(u_{\mu_i})\right)=%
\inf_{v\in \Phi^{-1}\big((-\infty, r_{\bar\mu})\big)}\left(\frac{\Phi(v)}{%
\mu_i}-\Psi(v)\right),
\end{equation*}
for every $i=1,2$.}

\textrm{Clearly, \eqref{mapnegative} together \eqref{J11} and the positivity
of $\mu$ imply that 
\begin{equation}  \label{mnegativo}
m_{\mu_i}<0,\,\,\,\,\,\, \mbox{for}\,\,\,\, i=1,2.
\end{equation}
Moreover, 
\begin{equation}  \label{m1m2}
m_{\mu_2}\leq m_{\mu_1},
\end{equation}
thanks to the fact that $0<\mu_1<\mu_2$. Then, by \eqref{J11}--\eqref{m1m2}
and again by the fact that $0<\mu_1<\mu_2$, we get that 
\begin{equation*}
J_{\mu_2}(u_{\mu_2})=\mu_2m_{\mu_2}\leq \mu_2m_{\mu_1}<\mu_1m_{\mu_1}=
J_{\mu_1}(u_{\mu_1}),
\end{equation*}
so that the map $\mu\mapsto J_{\mu}(u_{\mu})$ is strictly decreasing in $%
(0,\bar\mu)$.}

\textrm{The arbitrariness of $\bar\mu<\mu^{\star}$ shows that $\mu\mapsto
J_{\mu}(u_{\mu})$ is strictly decreasing in $(0,\mu^\star)$. }
\end{remark}

We would like to note that Theorem~\ref{secondo} is a bifurcation result,
since $\mu=0$ is a bifurcation point for problem $(F_f^{\mu})$, in the sense
that the pair $(0,0)$ belongs to the closure of the set 
\begin{equation*}
\big\{(u_\mu,\mu)\in E_0^{\alpha}\times (0,+\infty) : u_\mu\,\,\,%
\mbox{is a non-trivial weak solution
of $(F_f^{\mu})$}\big\}
\end{equation*}
in $E_0^{\alpha}\times\mathbb{R}$\,.

Indeed, by Theorem~\ref{secondo} we have that 
\begin{equation*}
\|u_\mu\|_{\alpha}\to 0 \,\,\,\,\mbox{as}\,\,\, \mu\to 0^+\,.
\end{equation*}
Hence, there exist two sequences $\big\{u_j\big\}$ in $E_0^{\alpha}$ and $%
\big\{\mu_j\big\}$ in $\mathbb{R}^+$ (here $u_j:=u_{\mu_j}$) such that 
\begin{equation*}
\mu_j\to 0^+ \,\,\,\mbox{and}\,\,\, \|u_j\|_{\alpha}\to 0,
\end{equation*}
as $j\to +\infty$\,.

\bigskip

Moreover, we would like to stress that for any $\mu_1, \mu_2\in (0,
\mu^{\star})$, with $\mu_1\not=\mu_2$, the solutions $u_{\mu_1}$ and $%
u_{\mu_2}$ given by Theorem~\ref{secondo} are different, thanks to the fact
that the map 
\begin{equation*}
(0, \mu^{\star})\ni\mu\mapsto J_{\mu}(u_\mu)
\end{equation*}
is strictly decreasing.

\begin{remark}
\label{Gale}\textrm{{We just observe, for completeness, that Theorem \ref%
{generalkernel0f} remains valid for equations like these 
\begin{equation*}
\frac{d}{dt} \Big({}_0 D_t^{\alpha-1}({}_0^c D_t^{\alpha} u(t)) - {}_t
D_T^{\alpha-1}({}_t^c D_T^{\alpha} u(t))\Big) + f(t,u(t)) = 0, \,\text{a.e. }
t \in [0, T]
\end{equation*}
where $f:\Omega\times\mathbb{R}\rightarrow \mathbb{R}$ is an $L^1$-Carath%
\'{e}odory function. In this case condition $(S_G)$ assume the form 
\begin{equation*}
\sup_{\gamma>0}\frac{\gamma^2}{\displaystyle \int_0^T\max_{|\xi|\leq
\gamma}F(t,\xi)dt}>\frac{\kappa_\alpha}{T},\eqno{(S_G^{\star})}
\end{equation*}
where we set $\displaystyle F(t,\xi):=\int_0^\xi f(t,s)ds$, for every $%
(t,\xi)\in [0,T]\times \mathbb{R}$. Further, the conclusions of Theorem \ref%
{secondo} are still true if we take 
\begin{equation*}
\mu\in\left(0,\frac{T}{\kappa_\alpha}\left(\sup_{\gamma>0}\frac{\gamma^2}{%
\displaystyle \int_0^T\max_{|\xi|\leq \gamma}F(t,\xi)dt}\right)\right).
\end{equation*}
Finally, if $f(t,0)\equiv 0$, we can assume, in order to obtain a
non-trivial solution, that there are a non-empty open set $D\subseteq (0,T)$
and $B\subset D$ of positive Lebesgue measure such that 
\begin{equation*}
\limsup_{\xi\rightarrow 0^+}\frac{\displaystyle\mathop {\rm ess\,inf}_{t\in
B}F(t,\xi)}{\xi^2}=+\infty,
\end{equation*}
and 
\begin{equation*}
\liminf_{\xi\rightarrow 0^+}\frac{\displaystyle\mathop {\rm ess\,inf}_{t\in
D}F(t,\xi)}{\xi^2}>-\infty.
\end{equation*}
} }
\end{remark}

In conclusion, we consider a direct application of Theorem \ref{secondo} and
Remark \ref{atzero}.

\begin{example}
\label{example}\textrm{{Let $\alpha \in (1/2, 1]$ and consider the following
parametric problem $($namely $(F_g^{\mu})$$)$$:$ 
\begin{equation*}
\begin{gathered} \frac{d}{dt} \Big({}_0 D_t^{\alpha-1}({}_0^c D_t^{\alpha}
u(t)) - {}_t D_T^{\alpha-1}({}_t^c D_T^{\alpha} u(t))\Big) + \mu g(u(t)) =
0, \,\text{a.e. } t \in [0, T] \\ u(0) = u(T) = 0, \end{gathered}
\end{equation*}
where the non-linearity $g$ has the form 
\begin{equation}  \label{particularautonomo}
g(u):=\left\{ 
\begin{array}{ll}
u^{r-1}+u^{s-1} & {\mbox{ if }} u\geq 0 \\ 
0 & {\mbox{ otherwise,}}%
\end{array}
\right.
\end{equation}
and in which $1<r<2<s$.}}

\textrm{\noindent Owing to Theorem \ref{secondo} and taking into account
Remark \ref{atzero} problem $(F_g^{\mu})$ admits at least one non-trivial
solution in $E^\alpha_0$ provided 
\begin{equation*}
0<\mu<\frac{rs\bar\gamma^{2-r}}{\kappa_\alpha(s+r\bar\gamma^{s-r})},
\end{equation*}
\noindent where 
\begin{equation*}
\bar\gamma:=\left(\frac{s(2-r)}{r(s-2)}\right)^{1/(s-r)}.
\end{equation*}
Moreover, from Remark \ref{behavious}, one also has 
\begin{equation*}
\lim_{\mu\rightarrow 0^+}\int_0^T |_0^c D_t^{\alpha} u_\mu(t)|^2 dt=0,
\end{equation*}
and the function 
\begin{equation*}
\mu\mapsto - \int_0^T {}_0^c D_t^{\alpha} u_\mu(t) \cdot {}_t^c D_T^{\alpha}
u_\mu(t) dt-\mu \int_0^T \left(\int_0^{u_\mu(t)}f(s)ds\right)dt,
\end{equation*}
is negative and strictly decreasing in $\left(0,\displaystyle\frac{%
rs\bar\gamma^{2-r}}{\kappa_\alpha(s+r\bar\gamma^{s-r})}\right)$. }
\end{example}

\begin{remark}
\label{conclusion} \label{unb2}\textrm{{\ We want to point out that the
energy functional $J_\mu $ related to problem $(F_g^{\mu})$ is not coercive.
Indeed, fix $u\in E_0^\alpha\setminus\{0\}$ and let $\tau\in\mathbb{R}$. We
have 
\begin{equation*}
\begin{aligned} J_{\mu}(\tau u) & = \Phi(\tau u)-\mu\int_0^{T}
\left(\int_0^{\tau u(t)}g(s)ds\right)\,dt\\ & \leq \frac{\tau^2}{|\cos (\pi
\alpha)|} \|u\|_{\alpha}^2-\mu \frac{\tau^r}{r}\|u\|_{L^r}^r-\mu
\frac{\tau^s}{s}\|u\|_{L^s}^s \to -\infty \end{aligned}
\end{equation*}
as $\tau\to +\infty$, bearing in mind that $r<2<s$.} }
\end{remark}

Besides the papers \cite{a1,a2,b1,b2} and \cite{b4,l1,w1,w2,z1} we cite the
monographs \cite{h1,m2,p1} as general references on the subject treated in
this paper. See also \cite{sv, svmountain, svlinking, servadeivaldinociBN}.

\medskip \indent {\bf Acknowledgements.} The authors are grateful to the
referee for the careful analysis of this paper and for constructive remarks.
The paper is realized with the auspices of the GNAMPA Project 2013 entitled: 
\textit{Problemi non-locali di tipo Laplaciano frazionario}.


\begin{thebibliography}{99}
\bibitem{a1} R.P. Agarwal, M. Benchohra and S. Hamani, \emph{A survey on
existence results for boundary value problems of nonlinear fractional
differential equations and inclusions}, Acta Appl. Math. \textbf{109}
(2010), 973-1033.

\bibitem{a2} B. Ahmad and J.J. Nieto, \emph{Existence results for a coupled
system of nonlinear fractional differential equations with three-point
boundary conditions}, Comput. Math. Appl. \textbf{58} (2009), 1838-1843.

\bibitem{Bo} C. Bai, \emph{Positive solutions for nonlinear fractional
differential equations with coefficient that changes sign}, Nonlinear Anal. 
\textbf{64} (2006), 677-685.

\bibitem{b2} C. Bai, \emph{Impulsive periodic boundary value problems for
fractional differential equation involving Riemann-Liouville sequential
fractional derivative}, J. Math. Anal. Appl. \textbf{384} (2011), 211-231.

\bibitem{B03} C. Bai, \emph{Existence of solutions for a nonlinear
fractional boundary value problem via a local minimum theorem}, Electron. J.
Diff. Equ. \textbf{2012} (2012), 1-9.

\bibitem{b1} Z. Bai and H. Lu, \emph{Positive solutions for boundary value
problem of nonlinear fractional differential equation}, J. Math. Anal. Appl. 
\textbf{311} (2005), 495-505.

\bibitem{b4} M. Benchohra, S. Hamani and S.K. Ntouyas, \emph{Boundary value
problems for differential equations with fractional order and nonlocal
conditions}, Nonlinear Anal. TMA \textbf{71} (2009), 2391-2396.

\bibitem{BMB} {G. Bonanno and G. Molica Bisci}, \emph{Infinitely many
solutions for a Dirichlet problem involving the $p$-Laplacian}, {Proc. Roy.
Soc. Edinburgh Sect. A}, \textbf{140}, no.~4, 1--16 (2010).

\bibitem{[4]} G. Borino, M. Di Paola and M. Zingales, \emph{A non-local
model of fractional heat conduction in rigid bodies.} Eur. J. Phys.-ST, in
press, 2011

\bibitem{[1]} C.E. Eringen, \emph{Theory of non-local thermoelasticity}.
Int.J. Engng. Sci. \textbf{12} (1974), 1063-1077.

\bibitem{h1} R. Hilfer, {Applications of Fractional Calculus in Physics}, 
\emph{World Scientific}, Singapore, 2000.

\bibitem{JZ} F. Jiao and Y. Zhou, \emph{Existence of solutions for a class
of fractional boundary value problems via critical point theory}, Comput.
Math. Appl. \textbf{62} (2011) 1181-1199.

\bibitem{k1} A.A. Kilbas, H.M. Srivastava and J.J. Trujillo, {Theory and
Applications of Fractional Differential Equations}, \emph{Elsevier},
Amsterdam, 2006.

\bibitem{kristaly} {A. Krist\'aly, V. R\u{a}dulescu and Cs. Varga},
Variational principles in mathematical physics, geometry, and economics.
Qualitative analysis of nonlinear equations and unilateral problems. With a
foreword by Jean Mawhin, \emph{Encyclopedia of Mathematics and its
Applications}, 136, \emph{Cambridge University Press}, Cambridge (2010).

\bibitem{l1} V. Lakshmikantham and A.S. Vatsala, \emph{Basic theory of
fractional differential equations}, Nonlinear Anal. TMA \textbf{69} (2008),
2677-2682.

\bibitem{MW} J. Mawhin and M. Willem, Critical Point Theorey and Hamiltonian
Systems, \emph{Springer}, New York, 1989.

\bibitem{m2} K.S. Miller and B. Ross, {An Introduction to the Fractional
Calculus and Fractional Differential Equations}, \emph{Wiley}, New York,
1993.

\bibitem{p1} I. Podlubny, {Fractional Differential Equations}, \emph{%
Academic Press}, San Diego, 1999.

\bibitem{[2]} Y. Z. Povstenko, \emph{Theory of termoelasticiy based on the
space-time-fractional heat conduction equation.} Phys. Scr. (2009) T1\textbf{%
36}, 014017.

\bibitem{Ra} P.H. Rabinowitz, Minimax Methods in Critical Point Theory with
Applications to Differential Equations, in: CBMS, vol. \textbf{65}, \emph{%
American Mathematical Society}, 1986.

\bibitem{Ri} F. Riewe, \emph{Nonconservative Lagrangian and Hamiltonian
mechanics}, Phys. Rev. E 53 (1996), 1890-1899.

\bibitem{Ricceri} {B. Ricceri}, \emph{A general variational principle and
some of its applications}, J. Comput. Appl. Math. \textbf{113} (2000),
401-410.

\bibitem{s1} S.G. Samko, A.A. Kilbas and O.I. Marichev, {Fractional Integral
and Derivatives: Theory and Applications}, \emph{Gordon and Breach},
Longhorne, PA, 1993.

\bibitem{sY} {R. Servadei}, \textit{The Yamabe equation in a non-local
setting}, to appear in Adv. Nonlinear Anal. \textbf{3}, (2013).

\bibitem{sBNRES} {R. Servadei}, \emph{A critical fractional Laplace equation
in the resonant case}, to appear in Topol. Methods Nonlinear Anal.

\bibitem{servadeiKavian} {R. Servadei}, \emph{Infinitely many solutions for
fractional Laplace equations with subcritical nonlinearity}, to appear in
Contemp. Math.

\bibitem{sv} {R. Servadei and E. Valdinoci}, \emph{Lewy-Stampacchia type
estimates for variational inequalities driven by nonlocal operators}, to
appear in Rev. Mat. Iberoam. \textbf{29} (2013).

\bibitem{servadeivaldinociREGO} {R. Servadei and E. Valdinoci}, \emph{Weak
and viscosity solutions of the fractional Laplace equation}, to appear in
Publ. Mat.

\bibitem{svmountain} {R. Servadei and E. Valdinoci}, \emph{Mountain Pass
solutions for non-local elliptic operators}, J. Math. Anal. Appl. \textbf{389%
} (2012), 887-898.

\bibitem{svlinking} {R. Servadei and E. Valdinoci}, \emph{Variational
methods for non-local operators of elliptic type}, Discrete Contin. Dyn.
Syst. \textbf{33}, 5 (2013), 2105-2137.

\bibitem{servadeivaldinociBN} {R. Servadei and E. Valdinoci}, \emph{The Br%
\'{e}zis-Nirenberg result for the fractional Laplacian}, to appear in Trans.
AMS.

\bibitem{[3]} H. H. Sherief, A.M.A. El-Sayed, A.M. Abd El-Latief, \emph{%
Fractional order theory of thermoelasticity}. Int. J. Sol. Str. (2010), 
\textbf{47},269-275.

\bibitem{struwe} {M. Struwe}, Variational methods, Applications to nonlinear
partial differential equations and Hamiltonian systems, \emph{Ergebnisse der
Mathematik und ihrer Grenzgebiete}, 3, \emph{Springer Verlag},
Berlin--Heidelberg (1990).

\bibitem{w1} J. Wang and Y. Zhou, \emph{A class of fractional evolution
equations and optimal controls}, Nonlinear Anal. RWA \textbf{12} (2011),
262-272.

\bibitem{w2} Z. Wei, W. Dong and J. Che, \emph{Periodic boundary value
problems for fractional differential equations involving a Riemann-Liouville
fractional derivative}, Nonlinear Anal. \textbf{73} (2010), 3232-3238.

\bibitem{wi} M. Willem, {Minimax theorems}, \emph{Birkh\"{a}user}, 1996.

\bibitem{z1} S. Zhang, \emph{Positive solutions to singular boundary value
problem for nonlinear fractional differential equation}, Comput. Math. Appl. 
\textbf{59} (2010), 1300-1309.
\end{thebibliography}
\end{document}